\numberwithin{equation}{section}
\newcommand{\abs}[1]{\lvert#1\rvert}
\newcommand{\transpose}[1]{\fourIdx{t}{}{}{}{#1}}
\newcommand{\bdry}{\partial}
\newcommand{\Eren}{\mathcal{E}_\mathrm{ren}}
\newcommand{\Aren}{\mathcal{A}_\mathrm{ren}}
\newcommand{\Vren}{\mathcal{V}_\mathrm{ren}}
\DeclareMathOperator{\Ric}{Ric}
\DeclareMathOperator{\tr}{tr}
\DeclareMathOperator{\grad}{grad}
\newcommand{\PO}{\mathit{PO}}
\newcommand{\PGL}{\mathit{PGL}}
\newcommand{\AdSplane}{\mathrm{AdS}_2}
\newtheorem{thm}{Theorem}[section]
\newtheorem{prop}[thm]{Proposition}
\newtheorem{lem}[thm]{Lemma}
\theoremstyle{definition}
\newtheorem{dfn}[thm]{Definition}
\theoremstyle{remark}
\newtheorem{rem}[thm]{Remark}
\title[Harmonic maps and conformal geodesics]{Asymptotic Dirichlet problem for harmonic maps and\\ conformal geodesics}
\author{Yoshihiko Matsumoto}
\address{Department of Mathematics, Graduate School of Science, Osaka University, Toyonaka, Osaka 560-0043, Japan}
\email{matsumoto.yoshihiko.sci@osaka-u.ac.jp}
\subjclass[2020]{Primary 53B20; Secondary 53B30, 53C18, 53C43, 58J32.}
\begin{document}

\maketitle

\begin{abstract}
	The asymptotic Dirichlet problem for harmonic maps from the hyperbolic plane into
	conformally compact Einstein manifolds is
	used to give a holographic characterization of conformal geodesics on the boundary at infinity,
	in a way deeply inspired by a work of Fine and Herfray on renormalized area minimization.
\end{abstract}

\section*{Introduction}

In this paper, we give the following holographic characterization of conformal geodesics,
which is a variant of a theorem of Fine and Herfray \cite{Fine-Herfray-22}.
We identify the hyperbolic plane $\mathbb{H}^2$ with the Poincar\'e disk
and call its boundary the boundary circle at infinity $\bdry_\infty\mathbb{H}^2$.
Moreover, we write $r$ for the hyperbolic distance from any fixed point in $\mathbb{H}^2$.

\begin{thm}
	\label{thm:main}
	Let $(X,g_+)$ be a conformally compact Einstein manifold of dimension $n+1\geqq 3$
	with smooth conformal infinity $\bdry_\infty X$.
	Suppose $\gamma\colon\bdry_\infty\mathbb{H}^2\to\bdry_\infty X$ is a smooth map
	having nowhere vanishing differential.

	(1)
	There is always a formal polyhomogeneous proper map $u\colon\mathbb{H}^2\to X$
	with boundary value $\gamma$ that is asymptotically harmonic to infinite order, i.e.,
	\begin{equation}
		\label{eq:harmonicity-to-infinite-order}
		\abs{\tau(u)}=o(e^{-m r})\qquad
		\text{as $r\to\infty$ for any $m>0$}.
	\end{equation}
	Also, such a formal extension $u$ of $\gamma$ always satisfies $\abs{\nabla du}=O(e^{-2r})$.

	(2)
	There exists such a formal polyhomogeneous extension $u\colon\mathbb{H}^2\to X$ of $\gamma$ satisfying
	\begin{equation}
		\label{eq:isometric-totally-geodesic-to-third-order}
		\abs{\nabla du}=o(e^{-2r})\qquad
		\text{as $r\to\infty$}
	\end{equation}
	if and only if $\gamma$ is a conformal geodesic in $\bdry_\infty X$.
\end{thm}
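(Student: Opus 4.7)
The plan is to reduce the problem to a formal power-series analysis in a common boundary defining function, to solve the asymptotic harmonic map equation inductively by indicial analysis, and to identify the first obstruction to improved decay with the conformal geodesic operator on $\bdry_\infty X$.

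Near $\bdry_\infty X$ I fix Fefferman--Graham normal-form coordinates $(\rho_X,x^i)$ in which $g_+=\rho_X^{-2}(d\rho_X^2+h_{\rho_X})$, with $h_{\rho_X}=h+\rho_X^2 h^{(2)}+\cdots$ starting from a chosen representative $h$ of the conformal infinity. On $\mathbb{H}^2$ I use $\rho=e^{-r}$ as boundary defining function together with the angular coordinate $\theta\in\bdry_\infty\mathbb{H}^2$. Since the tension field from a two-dimensional domain scales by $\Omega^{-2}$ under $g\mapsto\Omega^2 g$, asymptotic harmonicity to infinite order for $u$ with respect to $g_{\mathbb{H}^2}$ is equivalent to the analogous condition with respect to the smoothly compactified disk metric $\bar g_{\mathbb{H}^2}$, which is much cleaner.

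For part (1), I would posit the polyhomogeneous ansatz
\begin{equation*}
	\rho_X\circ u=u_0(\theta)\rho+u_1(\theta)\rho^2+\cdots,\qquad
	x^i\circ u=\gamma^i(\theta)+x^i_1(\theta)\rho+\cdots,\qquad u_0>0,
\end{equation*}
with log terms allowed at the finitely many critical exponents, and impose $\tau_{\bar g_{\mathbb{H}^2}}(u)=0$ order by order in $\rho$. The indicial operator essentially decouples into a scalar operator for the $\rho_X$-component and a tangential operator for $(x^i)$; the lowest non-trivial order uses the non-vanishing of $d\gamma$ to determine $u_0$ and $x^i_1$ uniquely up to sign, and subsequent coefficients are found algebraically at each integer power of $\rho$, with log terms introduced only where the indicial operator has non-trivial kernel. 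With the formal extension in hand, compute $\nabla du$ in a $g_{\mathbb{H}^2}$-orthonormal frame on the domain and a $g_+$-orthonormal frame on the target: the combinatorics of the Christoffel symbols forces the $\rho^0$- and $\rho^1$-contributions to $\nabla du$ to cancel once the determined values of $u_0,x^i_1$ are substituted, yielding $|\nabla du|=O(\rho^2)=O(e^{-2r})$.

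For part (2), the $\rho^2$-coefficient of $\nabla du$ in these frames, evaluated on the formal extension of part (1), is a third-order differential expression in $\gamma$ whose lower-order terms are built from the Schouten tensor of $h$ through the Fefferman--Graham coefficient $h^{(2)}$ (which is where the Einstein condition on $g_+$ enters). The main obstacle is to identify this expression with the conformal geodesic operator on $(\bdry_\infty X,[h])$: one must verify that it transforms covariantly under the conformal rescaling $h\mapsto e^{2\omega}h$ (which induces a compensating change of boundary gauge for $\rho_X$) and that it reduces to the standard form of the conformal geodesic equation. Once this identification is completed, $|\nabla du|=o(e^{-2r})$ is equivalent to the vanishing of this obstruction, i.e., to $\gamma$ being a conformal geodesic; the freedom left in the expansion at higher critical orders does not affect the $\rho^2$-coefficient, so the criterion is intrinsic to $\gamma$.
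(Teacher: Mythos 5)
Your overall architecture---formal expansion in a boundary defining function, indicial analysis, then identification of the order-two obstruction in $\nabla du$ with the conformal geodesic operator---is the same as the paper's (which works in the upper-half-plane model and likewise exploits the conformal covariance of the two-dimensional tension field). There are, however, two genuine gaps.

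First, your closing claim that ``the freedom left in the expansion at higher critical orders does not affect the $\rho^2$-coefficient'' of $\nabla du$ is false, and it hides the actual mechanism of part (2). The formally undetermined coefficients sit exactly at the indicial root $\nu=3$ (the $x_3$ and $y_3^i$ of \eqref{eq:polyhomogeneous-expansion-in-action}), and they enter the order-$\rho^2$ part of $\nabla du$ linearly: Lemma \ref{lem:second-fundamental-form} gives, e.g., $\nabla_s\tensor{\Phi}{_s^0}\equiv s(4x_3+\lambda^3\abs{\alpha}^2)$ and $\nabla_s\tensor{\Phi}{_s^i}\equiv 3sy_3^i$. Hence $\abs{\nabla du}=o(e^{-2r})$ is not the vanishing of an expression intrinsic to $\gamma$; it is an overdetermined system on $(x_3,y_3^i,\gamma)$ that first forces $x_3=-\frac14\lambda^3\abs{\alpha}^2$ and $y_3^i=0$, and only then leaves, from the remaining mixed component, a residual equation on $\gamma$, which is \eqref{eq:conformal-geodesics-in-terms-of-one-form-2}. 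Without this accounting the ``there exists \dots if and only if'' of part (2) is not established. Relatedly, the $O(e^{-2r})$ bound in part (1) requires you to check that the logarithmic coefficients at the critical exponent $\nu=3$ vanish (the paper computes $v^0=v^i=0$ in \S\ref{subsec:third-order-coefficients}); a surviving $s^3\log s$ term in $u$ would produce $s\log s$ contributions to the coordinate components of $\nabla du$, i.e.\ $\abs{\nabla du}=O(e^{-2r}\,r)$ only, and your indicial discussion merely ``allows'' such terms without ruling them out.

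Second, you explicitly defer ``the main obstacle,'' namely identifying the $\rho^2$-obstruction with the conformal geodesic operator together with a conformal-covariance verification; that is the heart of the theorem, so the proposal is incomplete at its crux. Note also that the covariance check is not actually needed: one can compute in a single fixed Graham--Lee normalization and recognize the resulting equation literally as \eqref{eq:conformal-geodesics-in-terms-of-one-form-2} with $\alpha$ given by \eqref{eq:one-form-alpha}; the conformal invariance of the conclusion is then inherited from the known invariance of the conformal geodesic equation itself.
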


Let us clarify the statement of the theorem.

The second fundamental form $\nabla du$ of a map $u\colon Y\to X$ between Riemannian manifolds
in the sense of Eells--Sampson \cite{Eells-Sampson-64}
is the covariant derivative of $du\in\Gamma(T^*Y\otimes u^*TX)$.
In other words, $\nabla du$ is a section of $T^*Y\otimes T^*Y\otimes u^*TX$ given by the local formula
\begin{equation}
	\tensor{(\nabla du)}{_p_q^k}=\partial_p\tensor{(du)}{_q^k}
	-\tensor{\Gamma}{^r_p_q}\tensor{(du)}{_r^k}
	+(\tensor{\Gamma}{^k_i_j}\circ u)\tensor{(du)}{_p^i}\tensor{(du)}{_q^j},
\end{equation}
where $\tensor{\Gamma}{^r_p_q}$ and $\tensor{\Gamma}{^k_i_j}$ are the Christoffel symbols of $Y$ and $X$.
When $u$ is an isometric embedding, $\nabla du$ is nothing but the
usual second fundamental form of a submanifold.
Recall also the tension field $\tau(u)$ is the trace of $\nabla du$ with respect to the metric of $Y$,
and $u$ is called a harmonic map if and only if $\tau(u)$ vanishes.
In the statement of the theorem, the pointwise norms of $\tau(u)$ and $\nabla du$ are
measured with respect to the hyperbolic metric of $\mathbb{H}^2$ and $g_+$.

\emph{Conformal geodesics}, or more traditionally \emph{conformal circles}, are distinguished curves
$\gamma\colon I\to M$ (where $I$ is an interval) in a conformal manifold $(M,[g])$
characterized by the condition that for any $t\in I$ there exists
a neighborhood of $t$ in which $\gamma$ satisfies, for an arbitrarily fixed
representative metric $g\in[g]$ and for some 1-form $\alpha$ defined near $\gamma(t)$,
\begin{subequations}
	\label{eq:system-for-conformal-geodesics-in-terms-of-one-form}
\begin{gather}
	\label{eq:conformal-geodesics-in-terms-of-one-form-1}
	\nabla_{\dot{\gamma}}\dot{\gamma}+2\alpha(\dot{\gamma})\dot{\gamma}-\abs{\dot{\gamma}}^2\alpha^\sharp=0,\\
	\label{eq:conformal-geodesics-in-terms-of-one-form-2}
	\nabla_{\dot{\gamma}}\alpha^\sharp-P(\dot{\gamma},\cdot)^\sharp-\alpha(\dot{\gamma})\alpha^\sharp
	+\frac{1}{2}\abs{\alpha}^2\dot{\gamma}=0
\end{gather}
\end{subequations}
where $\nabla$ is the Levi-Civita connection,
$P=\frac{1}{n-2}(\Ric-\frac{1}{2(n-1)}Rg)$ is the Schouten tensor (where $n=\dim M$),
and $\sharp$ denotes the metric dual, all taken with respect to the representative metric $g$.
It turns out that this condition on a curve $\gamma$ is irrelevant to any particular choice of $g$.
(We have implicitly assumed that $n\geqq 3$ here,
for in $n=2$ the definition of the Schouten tensor does not make sense.
When $n=2$, a symmetric 2-tensor $P$ that behaves similarly to the Schouten tensor in higher dimensions
is taken as a part of the geometric structure on $M$, in which case $M$ carries a M\"obius structure
in the sense of Calderbank \cite{Calderbank-98}.
Then conformal geodesics, or M\"obius geodesics, in $M$ can be defined by the same formulae.)

An important fact regarding conformal geodesics is that they allow projective reparametrizations,
and this is why it makes sense to take the circle $\bdry_\infty\mathbb{H}^2$, or a part of it,
as the domain of a conformal geodesic $\gamma$
(see \S\ref{sec:conformal-geodesics} for details).
Then it is a natural idea from the viewpoint of ``holographic principle'' to consider extending $\gamma$
into some preferable mapping $u\colon\mathbb{H}^2\to X$,
the target space $X$ being a conformally compact Einstein filling of $M$.
Theorem \ref{thm:main} provides an understanding of conformal geodesics along this line.

By ``a formal polyhomogeneous proper map $u$,''
we mean that $u$ is actually a collection of formal expansions
\eqref{eq:polyhomogeneous-expansion-of-map}
given for any choice of an identification of $\mathbb{H}^2$ with the upper-half plane,
any choice of a Graham--Lee normalization \eqref{eq:Graham-Lee-normal-form} of the target space $(X,g_+)$,
and any choice of local coordinates in $\bdry_\infty X$.
In view of this, we could also think of $g_+$ as a formal asymptotic polyhomogenous expansion.
Theorem \ref{thm:main-theorem-in-action} will make it explicit.

As we have indicated, our harmonic map approach toward conformal geodesics
is strongly inspired by the work \cite{Fine-Herfray-22} of Fine--Herfray
using minimal surfaces in $(X,g_+)$ that are critical with respect to
the renormalized area $\mathcal{A}_\mathrm{ren}$.
Their result can be summarized as follows.

\begin{thm}[Fine--Herfray \cite{Fine-Herfray-22}]
	\label{thm:Fine-Herfray}
	Let $(X,g_+)$ be a conformally compact Einstein manifold with smooth conformal infinity.
	Suppose $\Sigma\subset X$ is an even polyhomogeneous formal surface
	with $\gamma(I)=\overline{\Sigma}\cap\bdry_\infty X$,
	where $\overline{\Sigma}$ is the closure of $\Sigma$
	in the conformal compactification $\overline{X}=X\cup\bdry_\infty X$ and
	$\gamma\colon I\to\bdry_\infty X$ is a curve with nowhere vanishing velocity,
	that is formally critical for the renormalized area $\Aren$.
	Additionally, let $t_0\in I$, and suppose that
	$\varphi\colon\overline{U}\to \overline{V}\subset\overline{\Sigma}$ is
	a homeomorphism between an open neighborhood $\overline{U}$ in
	the closed upper-half plane $\smash{\overline{\mathbb{R}}}^2_+=\set{(t,s)|s\geqq 0}$ of $(t_0,0)$
	and an open neighborhood $\overline{V}$ of $\gamma(t_0)$ in $\overline{\Sigma}$
	satisfying the following conditions:
	\begin{enumerate}[(i)]
		\item
			$\varphi$ has an even polyhomogeneous asymptotic expansion
			as a map from $\overline{U}$ into $\overline{X}$.
		\item
			$\varphi$ restricts to a diffeomorphism between $U$ and $V$,
			where $U=\overline{U}\setminus\bdry\smash{\overline{\mathbb{R}}}^2_+$
			and $V=\overline{V}\setminus\bdry\overline{X}$.
		\item
			\label{item:isothermality-in-FH-theorem}
			$\varphi\colon U\to V$ gives an isothermal (i.e., conformal) parametrization of $V\subset\Sigma$.
		\item
			Near $t_0\in I$,
			the parametrization $\gamma\colon I\to \bdry_\infty X$ of the boundary curve equals
			the restriction of $\varphi$ to $\overline{U}\cap\set{s=0}$.
	\end{enumerate}
	Simply put, $\varphi$ is an isothermal polyhomogenous local parametrization of $\Sigma$
	by $U\subset\mathbb{H}^2$, and we are writing $\gamma$ for its boundary value.
	Then the following holds.

	(1) Such a parametrized formal surface $\varphi\colon U\to V\subset\Sigma$ necessarily satisfies
	\begin{equation}
		\label{eq:isometric-totally-geodesic-to-second-order-Fine-Herfray}
		\varphi^*g_+=g_{\mathbb{H}^2}+O(s^2),\qquad
		\mathrm{II}=O(s^2)\qquad
		\text{as $s\to 0$},
	\end{equation}
	where $\mathrm{II}$ denotes the second fundamental form of the surface $\Sigma$.

	(2) Such a parametrized formal surface $\varphi\colon U\to V\subset\Sigma$ satisfies
	\begin{equation}
		\label{eq:isometric-totally-geodesic-to-third-order-Fine-Herfray}
		\varphi^*g_+=g_{\mathbb{H}^2}+o(s^2),\qquad
		\mathrm{II}=o(s^2)\qquad
		\text{as $s\to 0$},
	\end{equation}
	possibly in a smaller neighborhood of $(t_0,0)$,
	if and only if $\gamma$ is a conformal geodesic near $t_0\in I$.
\end{thm}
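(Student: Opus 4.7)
The plan is to analyze the parametrization $\varphi$ through its polyhomogeneous expansion in the transverse coordinate $s$, working in Graham--Lee normal form for $g_+$ near $\gamma(t_0)$, and show that the conformal geodesic condition arises as the obstruction to extending the expansion cleanly past order $s^2$.

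First, I would set up coordinates. Near $\gamma(t_0)\in\bdry_\infty X$, fix a boundary representative $g$ and put $g_+=\rho^{-2}(d\rho^2+h_\rho)$ in Graham--Lee form, with $h_\rho|_{\rho=0}=g$ and the standard Einstein asymptotic expansion of $h_\rho$ in $\rho$. Write $\varphi(t,s)=(y(t,s),\rho(t,s))$, where $y$ takes values in $\bdry_\infty X$ and $\rho\geqq 0$ is the transverse coordinate, with $y(t,0)=\gamma(t)$, $\rho(t,0)=0$. The even polyhomogeneous assumption gives expansions $\rho=s\rho_1+s^2\rho_2+s^3\rho_3+\cdots$ and $y=\gamma+sy_1+s^2y_2+\cdots$ (with log terms suppressed at low orders by evenness).

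Next, for Part~(1), isothermality $\varphi^*g_+=\lambda^2(dt^2+ds^2)$, combined with the requirement that $\varphi$ is a homeomorphism onto its image (so $\lambda^2=s^{-2}+O(1)$), is expanded order by order. At the leading two orders it forces $\rho_1=|\dot\gamma|_g^{-1}$ and determines $y_1$ as a specific multiple of the unit $g$-normal to $\gamma$ in $\bdry_\infty X$, and it also pins down $\rho_2$ in terms of $\gamma$ and $\dot\gamma$. Because the Graham--Lee expansion of $h_\rho$ starts deviating from $g$ at order $\rho^2$, these leading data then automatically yield $\varphi^*g_+=g_{\mathbb{H}^2}+O(s^2)$ and $\mathrm{II}=O(s^2)$ without any minimality input: this is Part~(1). (Formal criticality for $\Aren$, i.e.\ formal minimality, enters only to show that the lower-order coefficients can be chosen consistently.)

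For Part~(2), I would write out the $s^2$-coefficients of $\varphi^*g_+-g_{\mathbb{H}^2}$ and of $\mathrm{II}$ explicitly in terms of $y_2(t)$, $\dot\gamma$, $\ddot\gamma$, and the Schouten tensor $P$ of the representative $g$. The formal renormalized-area criticality at the relevant order supplies the minimal surface equation on $y_2$, which determines its component normal to $\gamma$ in $\bdry_\infty X$; the remaining freedom is tangential, corresponding to reparametrizations of $\gamma$. The condition that the $s^2$-coefficient of both $\varphi^*g_+-g_{\mathbb{H}^2}$ and $\mathrm{II}$ vanish then becomes a pair of tensorial equations along $\gamma$, which after decoupling into tangential and normal parts and introducing the $1$-form $\alpha$ that absorbs the reparametrization freedom turns into exactly the system \eqref{eq:conformal-geodesics-in-terms-of-one-form-1}--\eqref{eq:conformal-geodesics-in-terms-of-one-form-2}. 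Conformal invariance of the vanishing statement under change of boundary representative $g\mapsto e^{2\Upsilon}g$ is checked either by a direct transformation of the coefficients or by observing that the obstruction tensor is, on general grounds, the pullback of a conformally invariant object along $\gamma$.

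The main obstacle is the last identification: matching the order-$s^2$ obstruction with the conformal geodesic system in the precise form \eqref{eq:system-for-conformal-geodesics-in-terms-of-one-form}. Both equations of the system must emerge, the Schouten-tensor contribution in \eqref{eq:conformal-geodesics-in-terms-of-one-form-2} must arise from the $\rho^2$-coefficient of $h_\rho$ (which is $-P$ in the Einstein case), and the auxiliary $1$-form $\alpha$ must be correctly identified with the data encoding how $s$ reparametrizes the boundary curve. This is a careful but in-principle-routine coordinate computation; its main value is to make explicit the coincidence between the Fine--Herfray obstruction and the Cartan--Yano system for conformal geodesics, which my theorem then recovers in the harmonic map setting.
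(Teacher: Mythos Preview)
Your overall strategy---expand $\varphi$ polyhomogeneously in $s$, determine the low-order coefficients from the imposed conditions, and identify the $s^2$-level obstruction with the conformal geodesic system---is the right one and matches what the paper does in \S\S\ref{subsec:first-order-coefficients}--\ref{subsec:second-fundamental-form}. The paper, however, takes a shortcut you do not: since an isothermal parametrization of a minimal surface is automatically harmonic, it works entirely with the tension field $\tau(\varphi)$ rather than disentangling isothermality from minimality. This is what makes the coefficient determination clean.

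There are concrete errors in your leading coefficients that would derail the subsequent computation. With $\rho=s\rho_1+\cdots$ and $y=\gamma+sy_1+\cdots$, the leading term of $\varphi^*g_+$ is
\[
s^{-2}\rho_1^{-2}\bigl((\rho_1^2+\lvert y_1\rvert^2)\,ds^2+\lvert\dot\gamma\rvert^2\,dt^2+2\langle\dot\gamma,y_1\rangle\,ds\,dt\bigr),
\]
so isothermality gives $\langle\dot\gamma,y_1\rangle=0$ and $\rho_1^2+\lvert y_1\rvert^2=\lvert\dot\gamma\rvert^2$. Minimality (the surface meets $\bdry_\infty X$ orthogonally) forces $y_1$ to be tangent to $\gamma$, hence $y_1=0$ and $\rho_1=\lvert\dot\gamma\rvert$, not $\lvert\dot\gamma\rvert^{-1}$. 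The first nontrivial tangential coefficient is $y_2=\tfrac{1}{2}\lvert\dot\gamma\rvert^2\alpha^\sharp$ with $\alpha$ given by \eqref{eq:one-form-alpha}; so the $1$-form $\alpha$ enters already at second order, not as a later reparametrization device.

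One structural point the paper makes explicit (end of \S\ref{subsec:second-fundamental-form}) and your sketch blurs: the two conditions in \eqref{eq:isometric-totally-geodesic-to-third-order-Fine-Herfray} contribute \emph{different} pieces of \eqref{eq:conformal-geodesics-in-terms-of-one-form-2}. Imposing $\mathrm{II}=o(s^2)$ yields only its component normal to $\dot\gamma$, while $\varphi^*g_+=g_{\mathbb{H}^2}+o(s^2)$ supplies the tangential component. So your ``decoupling into tangential and normal parts'' is correct, but the split is between the two hypotheses themselves, not within a single obstruction.
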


Theorem \ref{thm:Fine-Herfray} is philosophically
a holographic characterization of \emph{unparametrized} conformal geodesics
in terms of an $\Aren$-critical minimal surface $\Sigma$,
although in its statement one needs to take some isothermal coordinates in $V\subset\Sigma$,
which in turn gives a preferred parametrization of the boundary conformal geodesic.
The relationship between Theorems \ref{thm:Fine-Herfray} and \ref{thm:main} can be compared to
the length-minimization and the energy-minimization characterizations of
geodesics in Riemannian manifolds.

In fact, the asymptotic expansion of our harmonic map $u\colon\mathbb{H}^2\to X$ in Theorem \ref{thm:main}
is precisely recovered by the expansion of the $\Aren$-critical minimal isothermal immersion $\varphi\colon U\to X$
given in Theorem \ref{thm:Fine-Herfray},
which is mostly obvious because any isothermal parametrization of a minimal surface gives a harmonic mapping.
The virtue of Theorem \ref{thm:main} is that it identifies the map $u$
relying only upon harmonicity, a weaker condition than (the conjunction of) minimality and isothermality.
This refined understanding makes the necessary steps for the proof more streamlined,
and it also provides us a practical means to tackle the similar, but conceptually and computationally more intricate,
case of curves in CR manifolds, which will be explored in a separate paper.

We also point out that Theorem \ref{thm:main} does not use the renormalized energy in its statement,
while Theorem \ref{thm:Fine-Herfray} uses the renormalized area.
Actually, the notion of the renormalized energy $\Eren$ of a harmonic map $u$ can be defined quite naturally
and was used in the formulation of the main theorem in an earlier version of this manuscript.
But it was realized that the $\Eren$-criticality of $u$ automatically implied by
the asymptotic total geodesicness property in Theorem \ref{thm:main} (2).
Therefore, we removed the reference to the $\Eren$-criticality from the statement,
and we put the whole discussion of the renormalized energy in the appendix rather than the main text.

Lastly, we emphasize that in Theorem \ref{thm:main-theorem-in-action}
our characterization of conformal geodesics is extended
to the case of indefinite signature conformal classes on the boundary at infinity.
It is also clear that our approach has a room for higher-dimensional generalizations, which is not pursued here.

I would like to thank Rod Gover for informing me about the work \cite{Fine-Herfray-22} of Fine and Herfray.
I was benefited by discussions with Rafe Mazzeo, and I am thankful to Olivier Biquard for
letting me know B\'erard's work \cite{Berard-13}, which gives a notion of renormalized energy
different from the one we discuss in Appendix \ref{sec:renormalized-energy}.
This work was partially supported by JSPS KAKENHI Grant Number 20K03584 and 24K06738.

\section{Conformal geodesics}
\label{sec:conformal-geodesics}

The conformal invariance of conformal geodesics can be best understood through defining them
in terms of the normal Cartan connection.
For simplicity, we assume that $n\geqq 3$ throughout this section.

Recall that the $n$-dimensional standard pseudo-sphere $S^{p,q}$, where $p+q=n$, is identified with
the projectivization of the null cone in $\mathbb{R}^{p+1,q+1}$.
The group of conformal transformations of $S^{p,q}$ equals
the projective indefinite orthogonal group $G=\PO(p+1,q+1)=O(p+1,q+1)/\set{\pm 1}$.
Let us use the inner product given by the matrix
\begin{equation}
	\begin{pmatrix}
		& & 1 \\
		& I_{p,q} & \\
		1 & &
	\end{pmatrix},
	\qquad\text{where}\quad
	I_{p,q}=
	\begin{pmatrix}
		I_p & \\
			& I_q
	\end{pmatrix}.
\end{equation}
Then $\transpose{(1,0,\dots,0,0)}$ is a lightlike vector.
Let $P$ denote the isotropy subgroup of $G$ for the line through $\transpose{(1,0,\dots,0,0)}$,
so that $S^{p,q}\cong G/P$ and thus $G$ is a principal $P$-bundle over $S^{p,q}$.
Note that $G$ carries the Maurer--Cartan form $\omega$, a differential 1-form with values in
the Lie algebra $\mathfrak{g}=\mathfrak{o}(p+1,q+1)$.

In general, for any given $n$-dimensional conformal manifold $(M,[g])$ of possibly indefinite signature $(p,q)$,
there is a canonically associated pair of a principal $P$-bundle $\mathcal{G}\to M$
and a $\mathfrak{g}$-valued 1-form $\omega$ on $\mathcal{G}$,
which is known as the \emph{normal Cartan geometry associated with $(M,[g])$}
(see, e.g., \v{C}ap--Slov\'ak \cite{Cap-Slovak-09}*{\S1.6}).

In this context, there is an important grading of the Lie algebra $\mathfrak{g}$,
which is $\mathfrak{g}=\mathfrak{g}_{-1}\oplus\mathfrak{g}_0\oplus\mathfrak{g}_1$,
where $\mathfrak{g}_{-1}\cong\mathbb{R}^n$, $\mathfrak{g}_0\cong\mathfrak{o}(p,q)\oplus\mathbb{R}$,
and $\mathfrak{g}_1\cong(\mathbb{R}^n)^*$.
To be specific, we define each summand as the linear subspace of $\mathfrak{g}$
consisting of elements of the form
\begin{equation}
	\begin{pmatrix}
		& & \\
		X & & \\
		& -\transpose{X} & \phantom{M}
	\end{pmatrix},\qquad
	\begin{pmatrix}
		a & & \\
		& A & \\
		& & -a
	\end{pmatrix},\qquad
	\begin{pmatrix}
		\phantom{M} & Z & \\
		& & -\transpose{Z} \\
		& &
	\end{pmatrix},
\end{equation}
where $X\in\mathbb{R}^n$ (the set of column vectors),
$A\in\mathfrak{o}(p,q)$, $a\in\mathbb{R}$, and $Z\in(\mathbb{R}^n)^*$.
Then, the Lie algebra of the parabolic subgroup $P$ equals $\mathfrak{p}=\mathfrak{g}_0\oplus\mathfrak{g}_1$.

By the properties of Cartan connections,
$\omega$ defines a linear isomorphism $T_u\mathcal{G}\cong\mathfrak{g}$ for all $u\in\mathcal{G}$
by which vertical vectors are mapped onto $\mathfrak{p}$,
and hence each element $X$ of $\mathfrak{g}_{-1}$ defines a vector field $\omega^{-1}(X)$
on the principal bundle $\mathcal{G}$ that is transverse to the fibers.
A curve in $(M,[g])$ is called a \emph{conformal geodesic} if
it is a projection onto $M$ of an integral curve of $\omega^{-1}(X)$ for some $X\in\mathfrak{g}_{-1}$.
In fact, the same definition applies mostly verbatim to any other geometry that fall in the class of
parabolic geometries (see \cite{Cap-Slovak-09}*{\S5.3}).

Herzlich \cite{Herzlich-13} gave an alternative definition of parabolic geodesics
that makes use of so-called Weyl structures,
which specializes to conformal geometry as follows.
Recall that a \emph{Weyl connection} on a conformal manifold $(M,[g])$ is a connection $D$ of $TM$
that can be expressed as $D_XY=\nabla_XY+\alpha(X)Y+\alpha(Y)X-g(X,Y)\alpha^\sharp$,
where $\nabla$ is the Levi-Civita connection of some representative metric in $[g]$ and $\alpha$ is a 1-form.
Then, according to \cite{Herzlich-13},
a conformal geodesic can be reinterpreted as a curve $\gamma\colon I\to M$
satisfying the following properties locally (i.e., in some neighborhood of any $t_0\in I$):
\begin{enumerate}[(i)]
	\item
		\label{item:Herzlich-condition-1}
		There is a Weyl connection on $(M,[g])$ with respect to which $\gamma$ is a geodesic.
	\item
		\label{item:Herzlich-condition-2}
		The same Weyl connection is itself parallel along the curve $\gamma$ in a certain sense
		(that is formulated in terms of the normal Cartan connection).
\end{enumerate}
In fact, the conditions \eqref{eq:conformal-geodesics-in-terms-of-one-form-1}
and \eqref{eq:conformal-geodesics-in-terms-of-one-form-2} that we gave earlier
correspond to \ref{item:Herzlich-condition-1} and \ref{item:Herzlich-condition-2}, respectively.
For details, see the discussion in \cite{Herzlich-13}*{\S6.1}.

It is also possible to eliminate the auxiliary 1-form $\alpha$ from the system
\eqref{eq:system-for-conformal-geodesics-in-terms-of-one-form} in most cases.
Note that \eqref{eq:conformal-geodesics-in-terms-of-one-form-1} implies that
$\nabla_{\dot{\gamma}}(\abs{\dot{\gamma}}^2)+2\alpha(\dot{\gamma})\abs{\dot{\gamma}}^2=0$,
from which it follows that the causal character of $\dot{\gamma}$ is preserved along the curve.
When $\gamma$ is spacelike or timelike,
one can solve \eqref{eq:conformal-geodesics-in-terms-of-one-form-1} for $\alpha$ to obtain
\begin{equation}
	\label{eq:one-form-alpha}
	\alpha^\sharp
	=\frac{1}{\abs{\dot{\gamma}}^2}
	\left(\ddot{\gamma}-\frac{2\braket{\dot{\gamma},\ddot{\gamma}}}{\abs{\dot{\gamma}}^2}\dot{\gamma}\right)
\end{equation}
and deduce that
$\gamma$ is a conformal geodesic if and only if
\begin{equation}
	\label{eq:conformal-geodesics-third-order}
	\dddot{\gamma}-\frac{3\braket{\dot{\gamma},\ddot{\gamma}}}{\abs{\dot{\gamma}}^2}\ddot{\gamma}
	+\frac{3\abs{\ddot{\gamma}}^2}{2\abs{\dot{\gamma}}^2}\dot{\gamma}
	+2P(\dot{\gamma},\dot{\gamma})\dot{\gamma}-\abs{\dot{\gamma}}^2P(\dot{\gamma},\cdot)^\sharp=0.
\end{equation}
If $\gamma$ is lightlike, or null, then such a reduction of
\eqref{eq:system-for-conformal-geodesics-in-terms-of-one-form} is not possible.

We shall observe that conformal geodesics admit projective reparametrizations.
For non-null conformal geodesics, this is discussed in, e.g., Bailey--Eastwood \cite{Bailey-Eastwood-90}.

\begin{prop}
	\label{prop:projective-reparametrizations}
	The set of reparametrizations admitted by any fixed non-constant conformal geodesic
	is exactly the group $\PGL(2,\mathbb{R})$ of linear fractional transformations.
\end{prop}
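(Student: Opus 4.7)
The plan is to verify directly that the effect of a reparametrization on the conformal geodesic equation is mediated by the Schwarzian derivative, so that the admissible reparametrizations are exactly those whose Schwarzian vanishes, namely the Möbius transformations.

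For a non-null curve $\gamma$ I would use the reduced third-order ODE \eqref{eq:conformal-geodesics-third-order}. Setting $\tilde\gamma(s)=\gamma(f(s))$ and expanding via the chain rule yields $\dot{\tilde\gamma}=f'\dot\gamma$, $\ddot{\tilde\gamma}=f''\dot\gamma+(f')^2\ddot\gamma$, and $\dddot{\tilde\gamma}=f'''\dot\gamma+3f'f''\ddot\gamma+(f')^3\dddot\gamma$. Substituting these into the left-hand side of \eqref{eq:conformal-geodesics-third-order} for $\tilde\gamma$ and collecting terms by the number of primes on $f$, the mixed pieces should conspire to cancel, leaving
\begin{equation*}
	(\text{LHS for }\tilde\gamma)=(f')^3\,(\text{LHS for }\gamma)+\left(f'''-\frac{3(f'')^2}{2f'}\right)\dot\gamma.
\end{equation*}
The bracketed coefficient is $f'$ times the Schwarzian derivative $S(f)=f'''/f'-\tfrac{3}{2}(f''/f')^2$. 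Since $\dot\gamma\neq 0$ by non-constancy, $\tilde\gamma$ again satisfies \eqref{eq:conformal-geodesics-third-order} if and only if $S(f)=0$; the classical kernel of the Schwarzian consists exactly of Möbius transformations $f(s)=(as+b)/(cs+d)$ with $ad-bc\neq 0$, whose parameter group, after modding out the scalar ambiguity of $(a,b,c,d)$, is $\PGL(2,\mathbb{R})$.

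The null case must be treated separately since \eqref{eq:conformal-geodesics-third-order} is unavailable. Here I would work with the full system \eqref{eq:system-for-conformal-geodesics-in-terms-of-one-form}: equation \eqref{eq:conformal-geodesics-in-terms-of-one-form-1} pins down the necessary replacement $\tilde\alpha(\dot\gamma)$ of the auxiliary 1-form in terms of $\alpha(\dot\gamma)$ and $f''/(f')^2$, and feeding this into \eqref{eq:conformal-geodesics-in-terms-of-one-form-2} should again reduce the obstruction to $S(f)=0$. The main technical obstacle will be precisely this null case, since one has to handle the freely chosen part of $\tilde\alpha$ transverse to $\dot\gamma$ and verify that the bookkeeping still singles out the Schwarzian; alternatively, a uniform argument is available through the Cartan-geometric definition from \S\ref{sec:conformal-geodesics}, where the reparametrization group arises from the $\PGL(2,\mathbb{R})$ subgroup of $G$ generated by an $\mathfrak{sl}(2,\mathbb{R})$-triple containing a fixed $X\in\mathfrak{g}_{-1}$, whose exponentials act on the parameter by Möbius transformations.
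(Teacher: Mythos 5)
Your non-null argument is correct and is verifiably a different route from the paper's: you transform the reduced third-order equation \eqref{eq:conformal-geodesics-third-order} directly and obtain the cocycle identity
\begin{equation}
	E(\tilde{\gamma})=(f')^3E(\gamma)+f'\,S(f)\,\dot{\gamma},
\end{equation}
where $E$ denotes the left-hand side of \eqref{eq:conformal-geodesics-third-order}; I checked the chain-rule bookkeeping and the mixed terms do cancel as you claim. This gives both implications at once for spacelike/timelike curves and is arguably cleaner than the paper, which never leaves the first-order system \eqref{eq:system-for-conformal-geodesics-in-terms-of-one-form}: there one writes $\tilde{\alpha}=\alpha\circ f+\beta$, solves \eqref{eq:equation-for-reparametrizations-1} for $\beta^\sharp=-(f')^{-2}f''\abs{\dot{\gamma}}^{-2}\dot{\gamma}$, and substitutes into \eqref{eq:equation-for-reparametrizations-2} to land on the same Schwarzian condition. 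What the paper's formulation buys is that it degenerates gracefully to the null case, which is exactly where your proposal stops short of a proof.

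In the null case you correctly identify the difficulty but do not resolve it, and this is a genuine gap. The paper's resolution is worth internalizing: when $\abs{\dot{\gamma}}^2=0$, equation \eqref{eq:equation-for-reparametrizations-1} pins down only the contraction $\beta(\dot{\gamma})=-\frac{1}{2}(f')^{-2}f''$; contracting \eqref{eq:equation-for-reparametrizations-2} with $\dot{\gamma}$ and using $\ddot{\gamma}+2\alpha(\dot{\gamma})\dot{\gamma}=0$ kills the $\braket{\alpha,\beta}$ and $\abs{\beta}^2$ terms (they are multiplied by $\abs{\dot\gamma}^2$ after contraction is arranged appropriately) and yields the scalar ODE $\frac{d}{dt}(\beta(\dot{\gamma}))=\beta(\dot{\gamma})^2$, which combined with the forced value of $\beta(\dot{\gamma})$ gives $S(f)=0$ again. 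The converse direction — that every M\"obius $f$ is admissible for a null conformal geodesic — is the part your sketch leaves entirely open: one must produce a full 1-form $\beta$, not just its value on $\dot{\gamma}$. The paper does this by solving \eqref{eq:equation-for-reparametrizations-2} as an ODE for $\beta$ along the curve with initial value chosen so that $\beta(\dot{\gamma})=-\frac{1}{2}(f')^{-2}f''$ holds at one point; the scalar ODE above then propagates that equality, so \eqref{eq:equation-for-reparametrizations-1} holds identically. Your alternative Cartan-geometric route would also work and is uniform in the causal character, but as stated it is only a gesture; to make it a proof you would need to exhibit the $\mathfrak{sl}(2,\mathbb{R})$-triple through a given $X\in\mathfrak{g}_{-1}$ inside $\mathfrak{g}$ and verify that its action on the flow parameter of $\omega^{-1}(X)$ is by linear fractional transformations.
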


\begin{proof}
	Suppose that $(\gamma(t),\alpha(t))$ is a solution of
	\eqref{eq:system-for-conformal-geodesics-in-terms-of-one-form},
	and let $\tilde{\gamma}(s)=\gamma(f(s))$.
	If $\tilde{\gamma}(s)$ satisfies \eqref{eq:conformal-geodesics-in-terms-of-one-form-1}
	for some $\tilde{\alpha}(s)$, it implies that
	\begin{equation}
		(f')^2\ddot{\gamma}+f''\dot{\gamma}+2(f')^2\tilde{\alpha}(\dot{\gamma})\dot{\gamma}
		-(f')^2\abs{\dot{\gamma}}^2\tilde{\alpha}^\sharp=0
	\end{equation}
	(where the dots and the primes denote the $t$- and $s$-differentiations, respectively).
	Let $\tilde{\alpha}(s)=\alpha(f(s))+\beta(s)$.
	Then the system \eqref{eq:system-for-conformal-geodesics-in-terms-of-one-form}
	for the pair $(\tilde{\gamma}(s),\tilde{\alpha}(s))$ reads
	\begin{subequations}
		\label{eq:system-for-reparametrizations}
	\begin{gather}
		\label{eq:equation-for-reparametrizations-1}
		f''\dot{\gamma}+2(f')^2\beta(\dot{\gamma})\dot{\gamma}-(f')^2\abs{\dot{\gamma}}^2\beta^\sharp=0, \\
		\label{eq:equation-for-reparametrizations-2}
		\dot{\beta}^\sharp
		-\alpha(\dot{\gamma})\beta^\sharp-\beta(\dot{\gamma})\alpha^\sharp-\beta(\dot{\gamma})\beta^\sharp
		+\left(\braket{\alpha,\beta}+\frac{1}{2}\abs{\beta}^2\right)\dot{\gamma}=0.
	\end{gather}
	\end{subequations}

	When $\abs{\dot{\gamma}}^2\not=0$, \eqref{eq:equation-for-reparametrizations-1} is equivalent to
	$\beta^\sharp=-(f')^{-2}f''\abs{\dot{\gamma}}^{-2}\dot{\gamma}$,
	and putting it into \eqref{eq:equation-for-reparametrizations-2} deduces
	\begin{equation}
		\label{eq:Schwarzian-derivative-vanishing}
		\frac{1}{f'}\left(\frac{f''}{(f')^2}\right)'+\frac{1}{2}\left(\frac{f''}{(f')^2}\right)^2=0.
	\end{equation}
	This means that $f$ must be a linear fractional transformation because
	the left-hand side of \eqref{eq:Schwarzian-derivative-vanishing} is $(f')^{-2}$ times the Schwarzian derivative.

	When $\abs{\dot{\gamma}}^2=0$, \eqref{eq:equation-for-reparametrizations-1} implies that
	$\beta(\dot{\gamma})=-\frac{1}{2}(f')^{-2}f''$, while it follows from
	\eqref{eq:equation-for-reparametrizations-2} that
	$\frac{d}{dt}(\beta(\dot{\gamma}))
	-\beta(\ddot{\gamma})-2\alpha(\dot{\gamma})\beta(\dot{\gamma})-\beta(\dot{\gamma})^2=0$.
	Since $\ddot{\gamma}+2\alpha(\dot{\gamma})\dot{\gamma}=0$ by \eqref{eq:conformal-geodesics-in-terms-of-one-form-1},
	the latter implies
	\begin{equation}
		\label{eq:equation-for-beta-dotgamma-for-null-geodesics}
		\frac{d}{dt}(\beta(\dot{\gamma}))-\beta(\dot{\gamma})^2=0
	\end{equation}
	and by putting $\beta(\dot{\gamma})=-\frac{1}{2}(f')^{-2}f''$ into
	\eqref{eq:equation-for-beta-dotgamma-for-null-geodesics} we obtain \eqref{eq:Schwarzian-derivative-vanishing} again.
	The converse in this case can be checked as follows:
	if $f$ is a linear fractional transformation, then it follows from
	\eqref{eq:equation-for-beta-dotgamma-for-null-geodesics} that
	any local solution $\beta$ of \eqref{eq:equation-for-reparametrizations-2} satisfies
	$\beta(\dot{\gamma})=-\frac{1}{2}(f')^{-2}f''$ as long as this equality holds at some $t=t_0$.
	Hence \eqref{eq:equation-for-reparametrizations-1} is satisfied for such a locally defined 1-form $\beta$,
	and thus $\tilde{\gamma}(s)$ is a conformal geodesic.
\end{proof}

This fact allows us to give the following definition.

\begin{dfn}
	Let $c$ be a curve equipped with a projective structure,
	and $(M,[g])$ a conformal manifold.
	Then a map $\gamma\colon c\to M$ is a \emph{conformal geodesic} if,
	for any projective parametrization $p\colon I\to c$,
	$\gamma\circ p$ is a conformal geodesic.
\end{dfn}

In particular, consider the hyperbolic plane $\mathbb{H}^2$ and
the anti-de Sitter plane $\AdSplane$, which can be both regarded as quotients of hyperboloids
in the 3-dimensional Minkowski space given as
\begin{equation}
	\mathbb{H}^2=\tilde{\mathbb{H}}^2/\mathord{\sim},\qquad
	\tilde{\mathbb{H}}^2=\set{(\xi_0,\xi_1,\xi_2)\in\mathbb{R}^{2,1}|-\xi_0^2+\xi_1^2+\xi_2^2=-1}
\end{equation}
and
\begin{equation}
	\AdSplane=\widetilde{\mathrm{AdS}}_2/\mathord{\sim},\qquad
	\widetilde{\mathrm{AdS}}_2=\set{(\xi_0,\xi_1,\xi_2)\in\mathbb{R}^{1,2}|\xi_0^2-\xi_1^2-\xi_2^2=-1},
\end{equation}
where $\sim$ identifies antipodal points.
In both cases, the \emph{boundary circle at infinity} is the projectivization of the light cone,
which admits a natural action of $\PO(2,1)\cong\PGL(2,\mathbb{R})$.
Then, the both boundary circles carry natural projective structures,
which can be made apparent by representing $\mathbb{H}^2$ and
$\AdSplane\setminus\set{\xi_0+\xi_2=0}$ in the upper-half plane model by letting
\begin{equation}
	s=\frac{1}{\xi_0+\xi_2},\qquad
	t=\frac{\xi_1}{\xi_0+\xi_2}
\end{equation}
on $\tilde{\mathbb{H}}^2\cap\set{\xi_0+\xi_2>0}$
and on $\widetilde{\mathrm{AdS}}_2\cap\set{\xi_0+\xi_2>0}$,
which projects down diffeomorphically to $\mathbb{H}^2$ and to $\AdSplane\setminus\set{\xi_0+\xi_2=0}$, respectively
(see, e.g., Nomizu \cite{Nomizu-82} or Seppi--Trebeschi \cite{Seppi-Trebeschi-22}).
The metrics of $\mathbb{H}^2$ and $\AdSplane$ are expressed as
\begin{equation}
	\label{eq:upper-half-plane-metric-of-hyperbolic-plane}
	h_+=\frac{ds^2+dt^2}{s^2}
\end{equation}
and
\begin{equation}
	\label{eq:upper-half-plane-metric-of-AdS-plane}
	h_+=\frac{ds^2-dt^2}{s^2}
\end{equation}
in this model, and in both cases,
$t$ gives the projective parametrization of $\set{s=0}$, an open subset of the boundary circle.

\section{Harmonic maps and conformal geodesics}
\label{sec:proof-of-main-theorem}

The asymptotic Dirichlet problem for harmonic maps between hyperbolic spaces,
and more generally between conformally compact manifolds, has been
studied by Li--Tam \cite{Li-Tam-91,Li-Tam-93-I,Li-Tam-93-II},
Leung \cite{Leung_91_Thesis},
Economakis \cite{Economakis-93-Thesis}, and
Akutagawa--Matsumoto \cite{Akutagawa-Matsumoto-16}, among others.
The treatment in this section features the explicit asymptotic expansion of harmonic maps to higher orders
than discussed in these works.
While we focus on maps from the hyperbolic plane $\mathbb{H}^2$ into
conformally compact Einstein manifolds $(X,g_+)$,
the method presented here naturally extends to more general settings,
as partly illustrated in Appendix \ref{sec:renormalized-energy}.

\subsection{Preliminaries on conformally compact Einstein manifolds}
\label{sec:prelim-on-ccE-manifolds}

Let $(X,g_+)$ be a conformally compact Einstein manifold of dimension $n+1$ (where $n\geqq 2$)
in the following sense.
We suppose that a compact smooth manifold-with-boundary $\overline{X}$ is given,
and that $g_+$ is a complete smooth Einstein metric on the interior $X$.
We write $\bdry_\infty X$ or $M$ for $\bdry\overline{X}$, which we call the \emph{boundary at infinity}
of $(X,g_+)$.
For any smooth boundary defining function $\rho$,
it is assumed that $\rho^2g_+$ continuously extends to a Riemannian metric of $\overline{X}$,
and that the extension pulls back to a smooth metric on $M$.
Thus $M$ is equipped with an induced conformal class $[(\rho^2g_+)|_{T\bdry\overline{X}}]$ of Riemannian metrics,
hence also the name \emph{conformal infinity}.

In addition, in view of the result of Chru\'sciel--Delay--Lee--Skinner
\cite{Chrusciel-Delay-Lee-Skinner-05},
we assume throughout this paper that, via the identification by some diffeomorphism between
a collar neighborhood of the boundary in $\overline{X}$ and $M\times[0,x_0)$,
the metric $g_+$ can be put in the form
\begin{equation}
	\label{eq:Graham-Lee-normal-form}
	g_+=\frac{dx^2+g_x}{x^2},
\end{equation}
where $g_x$ is a family of Riemannian metrics on $M$ admitting a \emph{polyhomogeneous expansion}
\begin{equation}
	\label{eq:polyhomogeneous-expansion-of-metric}
	g_x\sim g_0+
	\sum_{k=1}^\infty\sum_{l=0}^{N_k}x^{\nu_k}(\log x)^l g_{\nu_k,l}
\end{equation}
where $0<\nu_1<\nu_2<\dotsb\to +\infty$, $N_k$ is a non-negative integer for all $k$,
$g_0$ is a Riemannian metric on $M$, and each $g_{\nu_k,l}$ is a symmetric 2-tensor on $M$.
We mean by \eqref{eq:polyhomogeneous-expansion-of-metric} that for any $k_0$ the truncated series has the property
\begin{equation}
	g_x-\left(g_0+
	\sum_{k=1}^{k_0}\sum_{l=0}^{N_k}x^{\nu_k}(\log x)^l g_{\nu_k,l}\right)
	=o(x^{\nu_{k_0}})
\end{equation}
and a similar asymptotic estimate holds for
the derivatives of $g_x$ of any order, both in the $x$-direction and the directions tangent to $M$.
In the expansion \eqref{eq:polyhomogeneous-expansion-of-metric}, the terms $x^{\nu_k}(\log x)^l g_{\nu_k,l}$ will
be called \emph{of order $\nu_k$}.

The metric $g_0$, which we also write $g$ in the sequel,
represents the induced conformal class on $M$.
The Fermi-type normalization \eqref{eq:Graham-Lee-normal-form} of $g_+$ will be termed
a \emph{Graham--Lee normalization} in what follows because of the work \cite{Graham-Lee-91},
and the function $x$ in \eqref{eq:Graham-Lee-normal-form} is called
the \emph{special boundary defining function} of $(X,g_+)$
associated with this normalization.

In fact, it is straightforward from the Einstein equation
written down for the expression \eqref{eq:Graham-Lee-normal-form}
(see, e.g., Graham \cite{Graham-00} or Fefferman--Graham \cite{Fefferman-Graham-12})
that $g_x$ takes the form
\begin{equation}
	\label{eq:Einstein-metric-expansion}
	g_x=g+xg_1+x^2g_2+\dotsb+x^{n-1}g_{n-1}+x^ng_n+x^n\log x\cdot h+o(x^n),
\end{equation}
and a more careful analysis shows that $g_x$ is forced to take the form
\begin{equation}
	\label{eq:Einstein-metric-even-expansion}
	g_x=
	\begin{cases}
		g+x^2g_2+o(x^2) & (\text{$n=2$}), \\
		g+x^2g_2+\dotsb+(\text{even powers})+\dotsb+x^{n-1}g_{n-1}+x^ng_n+o(x^n) & (\text{$n$ odd}), \\
		g+x^2g_2+\dotsb+(\text{even powers})+\dotsb+x^ng_n+x^n\log x\cdot h+o(x^n) & (\text{$n$ even $\geqq 4$}),
	\end{cases}
\end{equation}
which we call an \emph{even} expansion.
A derivation of \eqref{eq:Einstein-metric-even-expansion} can be outlined as follows.
The process of getting \eqref{eq:Einstein-metric-expansion} shows that
each truncated series $g^{(n-1)}_x=g+xg_1+\dots+x^{n-1}g_{n-1}$ is the unique polynomial in $x$ of
degree $n-1$ satisfying $\Ric(g_+)=-ng_++O(x^n)$.
By formally replacing the variable $x$ with $-x$,
one obtains $g_{-x}^{(n-1)}=g-xg_1+\dots+(-1)^{n-1}x^{n-1}g_{n-1}$,
which necessarily solves the same approximate Einstein equation because of the diffeomorphism invariance
of the Ricci tensor\footnote{The expression
$g_+=x^{-2}(dx^2+g^{(n-1)}_x)$ gives a metric satisfying $\Ric(g_+)=-ng_++O(x^n)$
defined not only for $x>0$ but also for $x<0$.
Consequently, the invariance of $\Ric$ with respect to the diffeomorphism $x\mapsto -x$ implies that
$g_-=x^{-2}(dx^2+g^{(n-1)}_{-x})$ defined for $x>0$ satisfies $\Ric(g_-)=-ng_-+O(x^n)$ as well.}.
The uniqueness then implies that $g_1=g_3=\dots=0$.
The vanishing of $h$ follows again from parity reasons for odd $n$,
and for $n=2$ this is ultimately because $\Ric(g)$ is pure trace in this dimension.

Regarding the higher-order terms that are not displayed in \eqref{eq:Einstein-metric-even-expansion},
it can be checked that $g_x$ does not contain any terms of non-integer order.
Another important remark is that, when $n$ is odd, the expansion does not contain any logarithmic terms.
In other words, $g_x$ is actually smooth (i.e., $C^\infty$) in $x$ up to $x=0$.

The coefficients $g_k$ for $k<n$ and $\tr_gg_n$ are \emph{formally determined}
in the sense that they can be written down explicitly in terms of $g$, its curvature tensor, and
the covariant derivatives thereof.
For example, it is known that $g_2=-P$ when $n\geqq 3$, where $P$ is the Schouten tensor of $g$,
while $\tr_gg_2=-\frac{1}{2}R$ for $n=2$.
We also remark that $\tr_gg_n$ vanishes for $n$ odd.
For $n\geqq 4$ even, the log-term coefficient $h$ is formally determined as well and trace-free,
which is called the \emph{Fefferman--Graham obstruction tensor} of $(M,[g])$.
The trace-free part of $g_n$ (for both $n$ odd and even) is \emph{formally undetermined}.

The knowledge in the literature regarding
the existence of a genuine conformally compact Einstein filling $(X,g_+)$ for a given $(M,[g])$ is limited.
However, a classical result of Fefferman--Graham \cite{Fefferman-Graham-85,Fefferman-Graham-12} states that,
if we regard \eqref{eq:polyhomogeneous-expansion-of-metric} as a formal expansion,
then the existence of such an expansion with $\Ric(g_+)=-ng_++o(x^n)$ for any given $(M,[g])$ is
always guaranteed.
We adopt this viewpoint for the metric $g_+$ in the sequel.
As an added benefit, we can also allow $[g]$ to be a conformal class of pseudo-Riemannian metrics,
since the result of Fefferman--Graham remains true for such conformal classes on $M$.

\subsection{Preliminaries on harmonic maps and outline of the proof}
\label{subsec:prelim-on-harmonic-maps}

Next, let $(Y^{q+1},h_+)$ and $(X^{n+1},g_+)$ be two conformally compact Einstein manifolds,
whose conformal infinities are denoted by $N$ and $M$, respectively.
(We will immediately specialize to the case where $Y=\mathbb{H}^2$ or $Y=\AdSplane$.)
We put the metrics into the Graham--Lee normal form \eqref{eq:Graham-Lee-normal-form} as
\begin{equation}
	\label{eq:Graham-Lee-normal-form-for-two-metrics}
	h_+=\frac{ds^2+h_s}{s^2},\qquad
	g_+=\frac{dx^2+g_x}{x^2},
\end{equation}
where $h_s\sim h+s^2h_2+\dotsb$ and $g_x\sim g+x^2g_2+\dotsb$ are polyhomogeneous.
We take the case $q=1$ into our consideration,
in which case we always assume that $h_+$ equals either
\eqref{eq:upper-half-plane-metric-of-hyperbolic-plane} or \eqref{eq:upper-half-plane-metric-of-AdS-plane},
while we continue to assume that $n\geqq 2$ for the target manifold.
Furthermore, $h_s$ and $g_x$ can also be regarded as formal polyhomogeneous expansions.

Consider a proper smooth mapping $u\colon Y\to X$ admitting a polyhomogeneous expansion,
whose precise meaning is as follows.
We take local coordinates $(t^a)$ in $N$ and $(y^i)$ in $M$,
and write $u$, locally, as a set of functions
\begin{equation}
	(u^0,u^i)=(u^0(s,t^a),u^i(s,t^a)).
\end{equation}
Our assumption is that the functions $u^0$ and $u^i$ have expansions
\begin{equation}
	\label{eq:polyhomogeneous-expansion-of-map}
	u^0\sim
	\sum_{k=1}^\infty\sum_{l=0}^{N_k}s^{\nu_k}(\log s)^l u^0_{\nu_k,l},\qquad
	u^i\sim u_0^i+
	\sum_{k=1}^\infty\sum_{l=0}^{N_k}s^{\nu_k}(\log s)^l u^i_{\nu_k,l}.
\end{equation}
Each $u^0_{\nu_k,l}$ represents a globally-defined smooth function on $N$,
while $u_0^i$ represents a smooth mapping $N\to M$, which is nothing but the restriction of $u$ to the boundary.
We also write $\varphi=\set{\varphi^i}_{i=1}^n$ for the latter mapping.
Again, we can also take \eqref{eq:polyhomogeneous-expansion-of-map} as formal expansions,
which is indispensable when $h_s$ and $g_x$ are considered formally.

The mapping $u$ is harmonic when the tension field $\tau(u)=\tr_{h_+}\nabla du$ vanishes.
In local expression, $\tau(u)$ is given by
\begin{equation}
	\label{eq:formula-of-tension-field}
	\begin{split}
	\tau(u)^I
	&=\Delta_{h_+}u^I+h_+^{AB}(\tensor{\Gamma}{^I_J_K}\circ u)(\partial_Au^J)(\partial_Bu^K)\\
	&=h_+^{AB}\partial_A\partial_Bu^I
	-h_+^{AB}\tensor{\Gamma}{^C_A_B}\partial_Cu^I
	+h_+^{AB}(\tensor{\Gamma}{^I_J_K}\circ u)(\partial_Au^J)(\partial_Bu^K),
	\end{split}
\end{equation}
where $A$, $B$, $\dotsc\in\set{0,1,\dots,q}$,
$I$, $J$, $\dotsc\in\set{0,1,\dots,n}$,
and $\tensor{\Gamma}{^C_A_B}$ and $\tensor{\Gamma}{^I_J_K}$ are the Christoffel symbols of
$h_+$ and $g_+$, respectively.
If we assume that $u$ is polyhomogeneous and $\tau(u)=o(1)$, or equivalently, $\tau(u)^I=o(s)$,
and moreover that $\varphi$ has nowhere vanishing differential,
then it can be checked that $u^0$ and $u^i$ carry no terms of order $0<\nu<1$ and
\begin{equation}
	\label{eq:first-normal-coefficient-of-harmonic-map}
	u_1^0=\frac{\abs{d\varphi}}{\sqrt{q}},\qquad
	u_1^i=0,
\end{equation}
where $\abs{d\varphi}$ is the pointwise norm with respect to $h$ and $g$
(see \S\ref{subsec:first-order-coefficients} for the case $Y=\mathbb{H}^2$,
and compare also with Li--Tam \cite{Li-Tam-93-I}*{Lemma 1.3} and
Akutagawa--Matsumoto \cite{Akutagawa-Matsumoto-16}*{Lemma 4}).
A further straightforward computation
shows that the components of $\tau(u)$ with respect to $(x,y^i)$ take the form
\begin{subequations}
	\label{eq:harmonic-map-equation-principal-part}
\begin{align}
	\label{eq:harmonic-map-equation-principal-part-normal}
	\tau(u)^0&=(s\partial_s)^2u^0-(q+2)(s\partial_s)u^0-(q-1)u^0+R(u)^0,\\
	\label{eq:harmonic-map-equation-principal-part-tangential}
	\tau(u)^i&=(s\partial_s)^2u^i-(q+2)(s\partial_s)u^i+R(u)^i,
\end{align}
\end{subequations}
with remainder terms $R(u)^0$ and $R(u)^i$ satisfying
\begin{equation}
	\tilde{u}^I=u^I+O(s^\nu(\log s)^l)\quad\Longrightarrow\quad
	R(\tilde{u})^I-R(u)^I=o(s^\nu).
\end{equation}
These formulae allow us to investigate polyhomogeneous expansion of a harmonic map $u$ in more detail.

Now we switch to the special cases where $Y=\mathbb{H}^2$ or $Y=\AdSplane$,
in which the metric $h_+$ is given by \eqref{eq:upper-half-plane-metric-of-hyperbolic-plane}
or \eqref{eq:upper-half-plane-metric-of-AdS-plane}.
Recall also that, if $n\geqq 3$, the Einstein condition for
\begin{equation}
	\label{eq:Graham-Lee-form-for-target}
	g_+=\frac{dx^2+g_x}{x^2}
\end{equation}
implies
\begin{equation}
	\label{eq:second-order-expansion-of-target-metric}
	g_x=g-x^2P+o(x^2),
\end{equation}
where $P$ is the Schouten tensor of $g$.
We write $g_2=-P$ when $n=2$ for brevity. In both cases, $P$ is a symmetric 2-tensor satisfying
\begin{equation}
	\tr P=-\frac{1}{2}R,\qquad
	\nabla^j P_{ij}=\frac{1}{2}\nabla_iR,
\end{equation}
$R$ being the scalar curvature of $g$ (see Fefferman--Graham \cite{Fefferman-Graham-12}*{Theorem 3.7}).

In this setting, the formulae \eqref{eq:harmonic-map-equation-principal-part} for the tension field specialize to
\begin{subequations}
	\label{eq:harmonic-map-equation-principal-part-for-surfaces}
\begin{align}
	\label{eq:harmonic-map-equation-principal-part-for-surfaces-normal}
	\tau(u)^0&=(s\partial_s)^2u^0-3(s\partial_s)u^0+R(u)^0,\\
	\label{eq:harmonic-map-equation-principal-part-for-surfaces-tangential}
	\tau(u)^i&=(s\partial_s)^2u^i-3(s\partial_s)u^i+R(u)^i.
\end{align}
\end{subequations}
Furthermore, since the expansion of $g_x$ consists only of integer-order terms,
further computation shows that, if we set $\tilde{u}^I=u^I+s^\nu(\log s)^lv^I$ for a positive integer $\nu$,
\begin{equation}
	\label{eq:change-of-tension-field-for-surfaces}
	\begin{split}
		&\tau(\tilde{u})^I-\tau(u)^I \\
		&=
		\begin{cases}
			\nu(\nu-3)s^\nu v^I+S^I & \text{if $l=0$}, \\
			\nu(\nu-3)s^\nu\log s\cdot v^I+(2\nu-3)s^\nu v^I+S^I & \text{if $l=1$}, \\
			\nu(\nu-3)s^\nu(\log s)^lv^I+(2\nu-3)ls^\nu(\log s)^{l-1}v^I
			+l(l-1)s^\nu(\log s)^{l-2}v^I+S^I & \text{if $l\geqq 2$},
		\end{cases}
	\end{split}
\end{equation}
where $S^I$ consists of terms of integer order $\geqq\nu+1$.
In view of this, the first part of our main theorem
(and its analog for the case $Y=\AdSplane$) can be shown immediately as follows.

\begin{proof}[Proof of Theorem \ref{thm:main} (1)]
	The given boundary curve $\gamma\colon\bdry_\infty Y\to M$ can be understood
	as the set $(\gamma^1(t),\gamma^2(t),\dots,\gamma^n(t))$ of functions using local coordinates $(y^i)$ in $M$.
	We define $u_{(0)}$, the zeroth-order approximate solution for the harmonic map equation, by
	$u_{(0)}^0(s,t)=0$ and $u_{(0)}^i(s,t)=\gamma^i(t)$.
	Starting with this, for $K=1$, $2$, $3$, $\dotsc$, we inductively construct a formal map $u_{(K)}$ of the form
	\begin{equation}
		u_{(K)}^0(s,t)=
		\sum_{k=1}^K\sum_{l=0}^{N_k}s^k(\log s)^l u^0_{k,l}(t),\qquad
		u_{(K)}^i(s,t)=
		\gamma^i(t)+
		\sum_{k=1}^K\sum_{l=0}^{N_k}s^k(\log s)^l u^i_{k,l}(t)
	\end{equation}
	satisfying $\tau(u_{(K)})^I=o(s^K)$.
	Assuming we have such $u_{(K-1)}$, if we set
	\begin{equation}
		u_{(K)}^0(s,t)=u_{(K-1)}^0(s,t)
		+\sum_ls^K(\log s)^l v^0_l(t),\qquad
		u_{(K)}^i(s,t)=u_{(K-1)}^i(s,t)
		+\sum_ls^K(\log s)^l v^i_l(t),
	\end{equation}
	formula \eqref{eq:change-of-tension-field-for-surfaces} implies that
	the terms $s^K(\log s)^l v^I_l$ that should be introduced to get $\tau(u_{(K)})^I=o(s^K)$ are
	uniquely determined provided $K\not=3$.
	When $K=3$, while $v^0_0$ ($=u^0_{3,0}$) and $v^i_0$ ($=u^i_{3,0}$) are undetermined,
	all the other logarithmic terms can be uniquely determined so that $\tau(u_{(3)})^I=o(s^3)$ is satisfied.
	(As we shall see below,
	$K=3$ is in fact the first step in which we may need to introduce logarithmic terms.)
\end{proof}

In what follows, we determine the expansion more explicitly.
Recall the general formula \eqref{eq:formula-of-tension-field} of the tension field.
Since the Laplacian on functions is conformally covariant in dimension $q+1=2$, in the current setting we have
\begin{equation}
	\label{eq:tension-field-for-maps-from-plane}
	\tau(u)^I
	=s^2\bigg[\partial_s^2u^I\pm\partial_t^2u^I
	+\tensor{\Gamma}{^I_J_K}(u(s,t))(\partial_su^J)(\partial_su^K)
	\pm\tensor{\Gamma}{^I_J_K}(u(s,t))(\partial_tu^J)(\partial_tu^K)\bigg],
\end{equation}
where we take the upper (resp.\ the lower) sign of the two plus-minus signs
for $u\colon\mathbb{H}^2\to(X,g_+)$ (resp.\ for $u\colon\AdSplane\to(X,g_+)$).
We will write $\tau(u)_{\mathbb{H}^2}^I$ and $\tau(u)_{\AdSplane}^I$ when a distinction
between the two tension fields must be made notationally.
Using \eqref{eq:second-order-expansion-of-target-metric}, we can derive that
\begin{equation}
	\label{eq:Christoffel-symbols-for-g+}
	\begin{gathered}
		\tensor{\Gamma}{^0_0_0}=-\frac{1}{x},\qquad
		\tensor{\Gamma}{^0_j_k}=\frac{1}{x}\tensor{g}{_j_k}+o(x),\qquad
		\tensor{\Gamma}{^0_0_k}=0,\\
		\tensor{\Gamma}{^i_0_k}=-\frac{1}{x}\tensor{\delta}{_k^i}-x\tensor{P}{_k^i}+o(x),\qquad
		\tensor{\Gamma}{^i_0_0}=0,\\
		\tensor{\Gamma}{^i_j_k}=\tensor{(\Gamma^g)}{^i_j_k}
		-\frac{1}{2}x^2(\nabla_j\tensor{P}{_k^i}+\nabla_k\tensor{P}{_j^i}-\nabla^i\tensor{P}{_j_k})+o(x^2),
	\end{gathered}
\end{equation}
where $\tensor{(\Gamma^g)}{^i_j_k}$ is the Christoffel symbol for $(M,g)$.

Detailed computations in the rest of this section
based on \eqref{eq:tension-field-for-maps-from-plane} and \eqref{eq:Christoffel-symbols-for-g+},
we will see that, if we require $\tau(u)=o(s^2)$, or equivalently $\tau(u)^I=o(s^3)$,
then $u$ must take the form
\begin{equation}
	\label{eq:polyhomogeneous-expansion-in-action}
	\begin{alignedat}{13}
		u^0(s,t) &= \hspace{1.5pt}x(s,t) &&= && \hspace{12pt}sx_1(t) && +s^2x_2(t) && +s^3x_3(t) && +s^3\log s\cdot v^0(t) && +o(s^3), \\
		u^i(s,t)\hspace{.5pt} &= y^i(s,t) &&= \gamma^i(t) && +sy_1^i(t) && +s^2y_2^i(t) && +s^3y_3^i(t) && +s^3\log s\cdot v^i(t) && +o(s^3)
	\end{alignedat}
\end{equation}
and the coefficients $x_1$, $y_1^i$, $x_2$, $y_2^i$, $v^0$, and $v^i$ are formally determined
(in a strong sense, i.e., in the sense that they are given by a local expression in terms of $\gamma$ and $g$).
The remaining coefficients, $x_3$ and $y_3^i$, are undetermined.

We will see in \S\ref{subsec:second-fundamental-form} that
if we additionally impose $\nabla du=o(s^2)$, $x_3$ and $y_3^i$ are fixed
and we obtain one more equality that must be satisfied by the coefficients,
hence by the curve $\gamma$, which happens to be the conformal geodesic equation
\eqref{eq:conformal-geodesics-third-order}.
More precisely, the following theorem is established in \S\ref{subsec:second-fundamental-form},
thereby completing the proof of Theorem \ref{thm:main} (2).

\begin{thm}
	\label{thm:main-theorem-in-action}
	Let $g_+$ be a conformally compact Einstein metric \eqref{eq:Graham-Lee-form-for-target},
	where $g_x$ is given formally, with $g=g_0$ possibly of indefinite signature.
	Suppose that $u$ is a formal polyhomogeneous map
	of the form \eqref{eq:polyhomogeneous-expansion-in-action}
	from $\mathbb{H}^2$ such that $\gamma$ is everywhere spacelike,
	or $u$ is such a map
	from $\AdSplane$ such that $\gamma$ is everywhere timelike.
	Then, if we set
	\begin{equation}
		\abs{\dot{\gamma}}^2=\abs{\dot{\gamma}}^2_g=\braket{\dot{\gamma},\dot{\gamma}}_g
		=\pm\lambda^2,\qquad \lambda=\lambda(t)>0,
	\end{equation}
	$\tau(u)^I=o(s^3)$ holds if and only if
	\begin{equation}
		\label{eq:formally-determined-coefficients-unified-form}
		x_1=\lambda,\qquad
		y_1^i=0,\qquad
		x_2=0,\qquad
		y_2^i=\frac{1}{2}\lambda^2\alpha^i,\qquad
		v^0=0,\qquad
		v^i=0,
	\end{equation}
	where $\alpha^i$ is defined by \eqref{eq:one-form-alpha}.
	Moreover,
	$\nabla du=o(s^2)$ holds if and only if $x_3=-\frac{1}{4}\lambda^3\abs{\alpha}^2$, $y_3^i=0$,
	and $\gamma$ is a conformal geodesic.
\end{thm}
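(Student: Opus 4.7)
The plan is to substitute the ansatz \eqref{eq:polyhomogeneous-expansion-in-action} into the tension-field formula \eqref{eq:tension-field-for-maps-from-plane}, use the Christoffel-symbol expansions \eqref{eq:Christoffel-symbols-for-g+}, and then match coefficients order by order in $s$ (and in powers of $\log s$). The key structural tool is the indicial operator $\nu(\nu-3)$ read off from \eqref{eq:change-of-tension-field-for-surfaces}.

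First I would handle orders $s^1$ and $s^2$, where the indicial operator is invertible. The order-$s$ equation comes from the $x^{-1}$ pieces of the target Christoffel symbols acting on products $\partial u \cdot \partial u$ and, after solving, reproduces \eqref{eq:first-normal-coefficient-of-harmonic-map}, namely $x_1 = \lambda$ and $y_1^i = 0$. Substituting $y_1^i = 0$ kills many potential order-$s^2$ contributions, and the resulting linear equation forces $x_2 = 0$ and $y_2^i = \tfrac{1}{2}\lambda^2 \alpha^i$ with $\alpha^i$ as in \eqref{eq:one-form-alpha}; no Schouten-tensor term enters yet.

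The delicate step is the resonance at $\nu = 3$, where $\nu(\nu-3) = 0$. By \eqref{eq:change-of-tension-field-for-surfaces} a log coefficient $v^I$ contributes to $\tau(u)^I$ only through its shadow $3s^3 v^I$, so the coefficient of $s^3 \log s$ in $\tau(u)^I$ is determined entirely by $R(u)^I$ computed from the previously fixed data together with the $O(x^2)$ piece of \eqref{eq:Christoffel-symbols-for-g+} where $P$ first enters. A direct expansion will show that this $\log s$ coefficient vanishes identically, forcing $v^0 = v^i = 0$ and leaving $(x_3, y_3^i)$ as free parameters. This establishes the first biconditional.

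For the second, I would express $(\nabla du)^I_{AB}$ in the $h_+$-orthonormal frame $\{s\partial_s, s\partial_t\}$ on the domain together with a $g_+$-orthonormal frame adapted to $\partial_x, \partial_{y^i}$ on the target, so that $\abs{\nabla du} = o(s^2)$ is equivalent to the vanishing of the leading frame components. The trace condition $\tau(u) = o(s^3)$, already imposed, kills one linear combination, leaving two independent traceless combinations. The first of these is linear in $(x_3, y_3^i)$ and fixes $y_3^i = 0$ and $x_3 = -\tfrac{1}{4}\lambda^3 \abs{\alpha}^2$; the second involves $\dddot{\gamma}^i$, the Schouten term $P(\dot{\gamma},\cdot)^\sharp$, and the previously fixed $y_2^i$, and after rearrangement with the aid of \eqref{eq:one-form-alpha} and \eqref{eq:conformal-geodesics-in-terms-of-one-form-1} becomes precisely the conformal geodesic equation \eqref{eq:conformal-geodesics-third-order}.

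The main obstacle will be the bookkeeping in the order-$s^3$ and second-fundamental-form computations: numerous contributions from the singular $x^{-1}$ pieces of the target Christoffel symbols must partially cancel, and the surviving Schouten pieces must be reorganized carefully to match \eqref{eq:conformal-geodesics-third-order}. Treating the hyperbolic and anti-de Sitter cases uniformly through the sign convention $\abs{\dot{\gamma}}^2 = \pm \lambda^2$ should keep the presentation tractable.
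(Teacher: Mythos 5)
Your proposal is correct and follows essentially the same route as the paper: order-by-order substitution into the tension field governed by the indicial factor $\nu(\nu-3)$, followed by a direct computation of the components of $\nabla du$ modulo $o(s)$ (the paper does this in coordinates rather than in orthonormal frames, but the two traceless combinations you isolate are exactly those of Lemma \ref{lem:second-fundamental-form}, with the diagonal components fixing $x_3$ and $y_3^i$ and the tangential off-diagonal component yielding \eqref{eq:conformal-geodesics-in-terms-of-one-form-2}, equivalently \eqref{eq:conformal-geodesics-third-order}). One correction to your resonance step: the coefficient of $s^3\log s$ in $\tau(u)^I$ vanishes automatically and carries no information about $v^I$; what forces $v^0=v^i=0$ is that the \emph{non-log} $s^3$ coefficient computed from the already-fixed lower-order data vanishes identically, so that the shadow term $3s^3v^I$ from \eqref{eq:change-of-tension-field-for-surfaces} is the sole surviving contribution at that order and must itself vanish.
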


For later computations, we remark that
\begin{equation}
	\label{eq:lambda-derivative}
	\partial_t\lambda=\pm\frac{\braket{\dot{\gamma},\ddot{\gamma}}}{\lambda}=-\lambda\alpha(\dot{\gamma})
\end{equation}
and hence
\begin{equation}
	\label{eq:lambda-twice-derivative}
	\begin{split}
		\partial_t^2\lambda
		&=-(\partial_t\lambda)\alpha(\dot{\gamma})
		-\lambda(\nabla_{\dot{\gamma}}\alpha)(\dot{\gamma})-\lambda\alpha(\ddot{\gamma})
		=\lambda\alpha(\dot{\gamma})^2
		-\lambda(\nabla_{\dot{\gamma}}\alpha)(\dot{\gamma})-\lambda\alpha(\ddot{\gamma})\\
		&=3\lambda\alpha(\dot{\gamma})^2
		-\lambda(\nabla_{\dot{\gamma}}\alpha)(\dot{\gamma})\mp\lambda^3\abs{\alpha}^2.
	\end{split}
\end{equation}

\subsection{First-order coefficients}
\label{subsec:first-order-coefficients}

The first-order coefficients in \eqref{eq:polyhomogeneous-expansion-in-action} are
already known as in \eqref{eq:first-normal-coefficient-of-harmonic-map}
for the case when $g$ is positive definite.
However, we recover this result based on the computation specialized in our current setting,
and at the same time we also generalize it to the case of indefinite conformal infinity.

First, we compute $\tau(u)^0$ modulo $o(s)$ as follows, starting from
\eqref{eq:tension-field-for-maps-from-plane}:
\begin{equation}
	\begin{split}
		\tau(u)^0
		&\equiv s^2\bigg[\tensor{\Gamma}{^0_0_0}\cdot((\partial_sx)^2\pm(\partial_tx)^2)
		+\tensor{\Gamma}{^0_j_k}\cdot(\partial_sy^j\cdot\partial_sy^k\pm\partial_ty^j\cdot\partial_ty^k)\bigg]\\
		&\equiv s^2\left[\left(-\frac{1}{x}\right)((\partial_sx)^2\pm(\partial_tx)^2)
			+\frac{1}{x}g_{jk}(\partial_sy^j\cdot\partial_sy^k\pm\partial_ty^j\cdot\partial_ty^k)\right]\\
		&\equiv s^2\cdot\frac{1}{sx_1+o(s)}(-x_1^2+\abs{y_1}^2\pm\abs{\dot{\gamma}}^2+o(1))\\
		&\equiv s\cdot\frac{1}{x_1+o(1)}(-x_1^2+\abs{y_1}^2\pm\abs{\dot{\gamma}}^2+o(1)).
	\end{split}
\end{equation}
If this expression is $o(s)$, then it is necessary that
\begin{equation}
	\lim_{s\to 0}\frac{1}{x_1+o(1)}(-x_1^2+\abs{y_1}^2\pm\abs{\dot{\gamma}}^2+o(1))=0,
\end{equation}
from which we can deduce that $x_1$ must be nowhere vanishing.
Using this, the above computation can be continued and we obtain
\begin{equation}
	\label{eq:tau-0-modulo-terms-higher-than-first-order}
	\tau(u)^0
	\equiv s\cdot\frac{1}{x_1}(-x_1^2+\abs{y_1}^2\pm\abs{\dot{\gamma}}^2).
\end{equation}

For $\tau(u)^i$, we can compute (using that $x_1$ is nowhere vanishing) as follows, modulo $o(s)$:
\begin{equation}
	\label{eq:tau-i-modulo-terms-higher-than-first-order}
	\begin{split}
		\tau(u)^i
		&\equiv s^2\bigg[2\tensor{\Gamma}{^i_0_k}\cdot(\partial_sx\cdot\partial_sy^k\pm\partial_tx\cdot\partial_ty^k)
		+\tensor{\Gamma}{^i_j_k}\cdot(\partial_sy^j\cdot\partial_sy^k\pm\partial_ty^j\cdot\partial_ty^k))\bigg]\\
		&\equiv s^2\bigg[2\cdot \left(-\frac{1}{x}\tensor{\delta}{_k^i}\right)(x_1y_1^k\pm 0)\bigg]
		\equiv s^2\cdot 2\cdot \left(-\frac{1}{sx_1}\tensor{\delta}{_k^i}\right)x_1y_1^k
		\equiv -2sy_1^i.
	\end{split}
\end{equation}

It follows from \eqref{eq:tau-0-modulo-terms-higher-than-first-order} and
\eqref{eq:tau-i-modulo-terms-higher-than-first-order}
that it is necessary for $\tau(u)^I=o(s)$ to hold that $\gamma$ is a spacelike curve when $Y=\mathbb{H}^2$,
and that $\gamma$ is a timelike curve when $Y=\AdSplane$.
Assuming this necessary condition is satisfied, $\tau(u)^I=o(s)$ if and only if
the first-order coefficients are given by
\begin{equation}
	\label{eq:first-order-coefficients}
	x_1=\lambda,\qquad y_1^i=0,
\end{equation}
where we write $\abs{\dot{\gamma}}^2=\pm\lambda^2$, $\lambda>0$, as already defined.

\begin{rem}
	Given the result for the case $Y=\mathbb{H}^2$, we could also argue as follows to draw the conclusion for
	the case $Y=\AdSplane$.
	Note that we get the anti-de Sitter metric \eqref{eq:upper-half-plane-metric-of-AdS-plane} from
	the hyperbolic metric \eqref{eq:upper-half-plane-metric-of-hyperbolic-plane}
	by formally replacing $s$ with $\pm\sqrt{-1}\,s$.
	The same is true for the formula \eqref{eq:tension-field-for-maps-from-plane} of the tension field.
	This leads to the following fact: if
	\begin{equation}
		\label{eq:first-order-solution-to-harmonic-map-equation-for-spacelike-case}
		\begin{alignedat}{3}
			x(s,t) &= && \hspace{12pt}sx_1(t)+o(s), \\
			y^i(s,t) &= \gamma^i(t) && +sy_1^i(t)+o(s)
		\end{alignedat}
	\end{equation}
	is a solution to the equation $\tau(u)_{\mathbb{H}^2}^I=o(s)$, then
	\begin{equation}
		\label{eq:first-order-solution-to-harmonic-map-equation-for-spacelike-case-with-wick-rotated}
		\begin{alignedat}{5}
			x(s,t) &= && \pm\sqrt{-1}\,sx_1(t) && +o(s), \\
			y^i(s,t) &= \gamma^i(t) && \pm\sqrt{-1}\,sy_1^i(t) && +o(s)
		\end{alignedat}
	\end{equation}
	formally solves the equation $\tau(u)_{\AdSplane}^I=o(s)$
	(where we are free to choose the $+$ or the $-$ sign, provided we make the same choice for the two lines).
	Now observe that, even when $\gamma$ is timelike, if we set
	\begin{equation}
		x_1=\sqrt{-1}\,\lambda,\qquad y_1^i=0
	\end{equation}
	so that $x_1^2=\abs{\dot{\gamma}}^2$, then this is formally a solution to
	the equation $\tau(u)_{\mathbb{H}^2}^I=o(s)$.
	By taking the $-$ sign in
	\eqref{eq:first-order-solution-to-harmonic-map-equation-for-spacelike-case-with-wick-rotated},
	we obtain the solution
	\begin{equation}
		\begin{alignedat}{11}
			x(s,t) &= && -\sqrt{-1}\cdot s\sqrt{-1}\,\lambda && +o(s) &&= && \hspace{12pt}s\lambda && +o(s), \\
			y^i(s,t) &= \gamma^i(t) && -\sqrt{-1}\,s\cdot 0 && +o(s) &&= \gamma^i(t) && && +o(s)
		\end{alignedat}
	\end{equation}
	to $\tau(u)_{\AdSplane}^I=o(s)$. The fact that this is the only solution can be shown in a similar vein.
\end{rem}

\subsection{Second-order coefficients}

Next, we shall determine $x_2$ and $y_2^i$ so that $\tau(u)^I=o(s^2)$.
Modulo $o(s^2)$ we compute
\begin{equation}
	\begin{split}
		\tau(u)^0
		&\equiv s^2\bigg[\partial_s^2x\pm\partial_t^2x
		+\tensor{\Gamma}{^0_0_0}\cdot((\partial_sx)^2\pm(\partial_tx)^2)
		+\tensor{\Gamma}{^0_j_k}\cdot(\partial_sy^j\cdot\partial_sy^k\pm\partial_ty^j\cdot\partial_ty^k)\bigg]\\
		&\equiv s^2\left[2x_2\pm 0+\left(-\frac{1}{x}\right)((\partial_sx)^2\pm(\partial_tx)^2)
		+\frac{1}{x}g_{jk}(\partial_sy^j\cdot\partial_sy^k\pm\partial_ty^j\cdot\partial_ty^k)\right]\\
		&\equiv s^2\left[2x_2-\frac{1}{s\lambda+s^2x_2}((\lambda+2sx_2)^2\pm 0)
		+\frac{1}{s\lambda+s^2x_2}g_{jk}(0\pm\dot{\gamma}^j\dot{\gamma}^k)\right]\\
		&\equiv s^2\left[2x_2-\frac{1}{s\lambda+s^2x_2}((\lambda+2sx_2)^2-\lambda^2)\right]\\
		&\equiv s^2\left[2x_2-\frac{1}{s\lambda+s^2x_2}\cdot 4s\lambda x_2\right]
		\equiv -2s^2x_2.
	\end{split}
\end{equation}
On the other hand, $\tau(u)^i$ modulo $o(s^2)$ is given by
\begin{equation}
	\tau(u)^i
	\equiv s^2\bigg[\partial_s^2y^i\pm\partial_t^2y^i
	+2\tensor{\Gamma}{^i_0_k}\cdot(\partial_sx\cdot\partial_sy^k\pm\partial_tx\cdot\partial_ty^k)
	+\tensor{\Gamma}{^i_j_k}\cdot(\partial_sy^j\cdot\partial_sy^k\pm\partial_ty^j\cdot\partial_ty^k)\bigg].
\end{equation}
By formula \eqref{eq:Christoffel-symbols-for-g+} and the fact that $x=O(s)$,
the third term in the right-hand side bracket equals
$2(-x^{-1}\tensor{\delta}{_k^i})(\partial_sx\cdot\partial_sy^k\pm\partial_tx\cdot\partial_ty^k)$ modulo $o(1)$,
while the fourth term equals
$\tensor{(\Gamma^g)}{^i_j_k}(y(s,t))\cdot(\partial_sy^j\cdot\partial_sy^k\pm\partial_ty^j\cdot\partial_ty^k)$
modulo $o(1)$.
Moreover, we have $\tensor{(\Gamma^g)}{^i_j_k}(y(s,t))=\tensor{(\Gamma^g)}{^i_j_k}(\gamma(t))$ modulo $o(1)$.
Therefore we get, by writing $\tensor{(\Gamma^g)}{^i_j_k}$ for $\tensor{(\Gamma^g)}{^i_j_k}(\gamma(t))$,
\begin{equation}
	\begin{split}
		\tau(u)^i
		&\equiv s^2\bigg[2y_2^i\pm\partial_t\dot{\gamma}^i
		-\frac{2}{s\lambda}\tensor{\delta}{_k^i}(\lambda\cdot 2sy_2^k\pm(s\partial_t\lambda)\cdot\dot{\gamma}^k)
		+\tensor{(\Gamma^g)}{^i_j_k}(0\pm\dot{\gamma}^j\dot{\gamma}^k)\bigg]\\
		&\equiv s^2\bigg[2y_2^i\pm\partial_t\dot{\gamma}^i
		-4y_2^i\mp 2\frac{\partial_t\lambda}{\lambda}\cdot\dot{\gamma}^i
		\pm\tensor{(\Gamma^g)}{^i_j_k}\dot{\gamma}^j\dot{\gamma}^k\bigg]
		\equiv s^2\bigg[{-2y_2^i}\pm\ddot{\gamma}^i
		\pm 2\alpha(\dot{\gamma})\dot{\gamma}^i\bigg]
	\end{split}
\end{equation}
by \eqref{eq:lambda-derivative}.
Consequently, $\tau(u)^I=o(s^2)$ holds if and only if $x_2=0$ and
\begin{equation}
	y_2^i
	=\pm\frac{1}{2}(\ddot{\gamma}^i+2\alpha(\dot{\gamma})\dot{\gamma}^i)
	=\pm\frac{1}{2}\abs{\dot{\gamma}}^2\alpha^i
	=\frac{1}{2}\lambda^2\alpha^i.
\end{equation}

\subsection{Third-order coefficients}
\label{subsec:third-order-coefficients}

Equation \eqref{eq:harmonic-map-equation-principal-part-for-surfaces} shows that
there is no contribution of $x_3$ and $y_3^i$ to the $s^3$ term in $\tau(u)^I$, which we recover below.
In fact, we will also see that $\tau(u)^I=o(s^3)$ is already satisfied by the coefficients determined so far,
and that the log-term coefficients $v^0$ and $v^i$ must be zero.

Recall that
\begin{equation}
	\tau(u)^0
	=s^2\bigg[\partial_s^2x\pm\partial_t^2x
	+\tensor{\Gamma}{^0_0_0}\cdot((\partial_sx)^2\pm(\partial_tx)^2)
	+\tensor{\Gamma}{^0_j_k}\cdot(\partial_sy^j\cdot\partial_sy^k\pm\partial_ty^j\cdot\partial_ty^k)\bigg].
\end{equation}
Note that $\tensor{\Gamma}{^0_j_k}$ in this formula actually means $\tensor{\Gamma}{^0_j_k}(u(s,t))$,
which must be computed as
\begin{equation}
	\begin{split}
		\tensor{\Gamma}{^0_j_k}(u(s,t))
		&=\frac{1}{x}g_{jk}(y(s,t))+o(s)
		=\frac{1}{x}g_{jk}(\gamma(t)+s^2y_2(t))+o(s) \\
		&=\frac{1}{x}\left(g_{jk}(\gamma(t))+s^2\frac{\partial g_{jk}}{\partial y^l}y^l_2(t)\right)+o(s).
	\end{split}
\end{equation}
(We substitute $\frac{1}{2}\lambda^2\alpha^l$ for $y_2^l$ later.)
Therefore we have, modulo $o(s^3)$,
\begin{multline}
	\label{eq:tau-0-modulo-third-order}
	\tau(u)^0
	\equiv s^2\bigg[6sx_3\pm s\partial_t^2\lambda-\frac{1}{s\lambda}((\partial_sx)^2\pm(\partial_tx)^2)\\
	\hspace{3em}
	+\frac{1}{s\lambda}\left(g_{jk}+s^2\frac{\partial g_{jk}}{\partial y^l}y_2^l\right)
	(\partial_sy^j\cdot\partial_sy^k\pm\partial_ty^j\cdot\partial_ty^k)\bigg].
\end{multline}
Consequently,
\begin{equation}
	\begin{split}
		\tau(u)^0
		&\equiv
		s^2\bigg[6sx_3\pm s\partial_t^2\lambda
		-\frac{1}{s\lambda}((\lambda+3s^2x_3)^2\pm(s\partial_t\lambda)^2)\\
		&\hspace{2em}
		+\frac{1}{s\lambda}\left(g_{jk}+s^2\frac{\partial g_{jk}}{\partial y^l}y_2^l\right)
		(s\lambda^2\alpha^j\cdot s\lambda^2\alpha^k
		\pm(\dot{\gamma}^j+s^2\partial_ty_2^j)(\dot{\gamma}^k+s^2\partial_ty_2^k))\bigg]\\
		&\equiv
		s^2\bigg[6sx_3\pm s\partial_t^2\lambda
		-\frac{1}{s\lambda}(\lambda^2+6s^2\lambda x_3\pm s^2(\partial_t\lambda)^2)\\
		&\hspace{2em}
		+\frac{1}{s\lambda}\left(g_{jk}+s^2\frac{\partial g_{jk}}{\partial y^l}y_2^l\right)
		(s^2\lambda^4\alpha^j\alpha^k
		\pm(\dot{\gamma}^j\dot{\gamma}^k+s^2\dot{\gamma}^j\partial_ty_2^k+s^2\dot{\gamma}^k\partial_ty_2^j))\bigg]\\
		&\equiv
		s^2\bigg[{\pm s\partial_t^2\lambda}-\frac{1}{s\lambda}(\lambda^2\pm s^2(\partial_t\lambda)^2)\\
		&\hspace{2em}
		+\frac{1}{s\lambda}(s^2\lambda^4\abs{\alpha}^2+\lambda^2\pm 2s^2\braket{\dot{\gamma},\partial_ty_2})
		\pm\frac{1}{s\lambda}\cdot s^2\frac{\partial g_{jk}}{\partial y^l}y_2^l\cdot
		\dot{\gamma}^j\dot{\gamma}^k\bigg].
	\end{split}
\end{equation}
Using the fact that
\begin{equation}
	\label{eq:Christoffel-symbol-formula}
	\frac{1}{2}\frac{\partial g_{jk}}{\partial y^l}\cdot\dot{\gamma}^j\dot{\gamma}^k
	=g_{jm}\tensor{\Gamma}{^m_k_l}\dot{\gamma}^j\dot{\gamma}^k,
\end{equation}
we obtain
\begin{equation}
	\begin{split}
		\tau(u)^0
		&\equiv
		s^3\bigg[{\pm\partial_t^2\lambda}\mp\frac{1}{\lambda}(\partial_t\lambda)^2
		+\frac{1}{\lambda}(\lambda^4\abs{\alpha}^2\pm 2\braket{\dot{\gamma},\nabla_{\dot{\gamma}}y_2})\bigg]\\
		&\equiv
		s^3\bigg[{\pm\partial_t^2\lambda}\mp\frac{1}{\lambda}(\partial_t\lambda)^2
		+\lambda^3\abs{\alpha}^2
		\pm 2(\partial_t\lambda)\alpha(\dot{\gamma})
		\pm\lambda(\nabla_{\dot{\gamma}}\alpha)(\dot{\gamma})\bigg],
	\end{split}
\end{equation}
which vanishes by \eqref{eq:lambda-derivative} and \eqref{eq:lambda-twice-derivative}.
On the other hand, $\tau(u)^i$ modulo $o(s^3)$ is given by
\begin{equation}
	\label{eq:tau-i-modulo-third-order}
	\begin{split}
		\tau(u)^i
		&\equiv
		s^2\bigg[\partial_s^2y^i\pm\partial_t^2y^i
		+2\tensor{\Gamma}{^i_0_k}\cdot(\partial_sx\cdot\partial_sy^k\pm\partial_tx\cdot\partial_ty^k)
		+\tensor{\Gamma}{^i_j_k}\cdot(\partial_sy^j\cdot\partial_sy^k\pm\partial_ty^j\cdot\partial_ty^k)\bigg]\\
		&\equiv
		s^2\bigg[(2y_2^i+6sy_3^i)\pm\partial_t\dot{\gamma}^i
		+2\left(-\frac{1}{x}\tensor{\delta}{_k^i}-x\tensor{P}{_k^i}\right)
		(\partial_sx\cdot\partial_sy^k\pm\partial_tx\cdot\partial_ty^k)\\
		&\hspace{2em}
		+\tensor{(\Gamma^g)}{^i_j_k}(y(s,t))\cdot
		(\partial_sy^j\cdot\partial_sy^k\pm\partial_ty^j\cdot\partial_ty^k)\bigg].
	\end{split}
\end{equation}
Since $\partial_sx\cdot\partial_sy^k+\partial_tx\cdot\partial_ty^k$ is $O(s)$,
the contribution of $-x\tensor{P}{_k^i}$ is negligible.
Therefore, by noting also that $\partial_sy^j\cdot\partial_sy^k=O(s^2)$,
\begin{equation}
	\begin{split}
		\tau(u)^i
		&\equiv
		s^2\bigg[(2y_2^i+6sy_3^i)\pm\partial_t\dot{\gamma}^i
		-\frac{2}{s\lambda}\tensor{\delta}{_k^i}
		(\lambda\cdot(2sy_2^k+3s^2y_3^k)\pm(s\partial_t\lambda)\cdot\dot{\gamma}^k)
		\pm\tensor{(\Gamma^g)}{^i_j_k}\dot{\gamma}^j\dot{\gamma}^k\bigg]\\
		&\equiv
		s^2\bigg[2y_2^i\pm\partial_t\dot{\gamma}^i
		-4y_2^i\mp 2\frac{\partial_t\lambda}{\lambda}\dot{\gamma}^i
		\pm\tensor{(\Gamma^g)}{^i_j_k}\dot{\gamma}^j\dot{\gamma}^k\bigg]
		\equiv
		s^2\bigg[{\pm\ddot{\gamma}^i}-\lambda^2\alpha^i\mp 2\frac{\partial_t\lambda}{\lambda}\dot{\gamma}^i\bigg]
	\end{split}
\end{equation}
and this vanishes again.
Consequently, $\tau(u)^I=o(s^3)$ holds no matter how one set $x_3$ and $y_3^i$.

Moreover, if we let
\begin{equation}
	\begin{alignedat}{13}
		u^0 &= \hspace{.5pt}x &&= && \hspace{12pt}s\lambda && && +s^3x_3 && +s^3\log s\cdot v^0 && +o(s^3), \\
		u^i\hspace{.5pt} &= y^i &&= \gamma^i && && +\frac{1}{2}s^2\lambda^2\alpha^i && +s^3y_3^i && +s^3\log s\cdot v^i && +o(s^3),
	\end{alignedat}
\end{equation}
then \eqref{eq:tau-0-modulo-third-order} becomes
\begin{multline}
	\tau(u)^0
	\equiv s^2\bigg[(6sx_3+5sv^0+6s\log s\cdot v^0)\pm s\partial_t^2\lambda
	-\frac{1}{s\lambda}((\partial_sx)^2\pm(\partial_tx)^2)\\
	+\frac{1}{s\lambda}\left(g_{jk}+s^2\frac{\partial g_{jk}}{\partial y^l}y_2^l\right)
	(\partial_sy^j\cdot\partial_sy^k\pm\partial_ty^j\cdot\partial_ty^k)\bigg].
\end{multline}
By continuing the computation as before, we obtain $\tau(u)^0\equiv 3s^3v^0$,
and hence $v^0$ must be zero in order for $\tau(u)^0=o(s^3)$.
Similarly, \eqref{eq:tau-i-modulo-third-order} becomes
\begin{multline}
	\tau(u)^i
	\equiv s^2\bigg[(2y_2^i+6sy_3^i+5sv^i+6s\log s\cdot v^i)\pm\partial_t\dot{\gamma}^i\\
	+2\left(-\frac{1}{x}\tensor{\delta}{_k^i}-x\tensor{P}{_k^i}\right)
	(\partial_sx\cdot\partial_sy^k\pm\partial_tx\cdot\partial_ty^k)\\
	+\tensor{(\Gamma^g)}{^i_j_k}(y(s,t))\cdot(\partial_sy^j\cdot\partial_sy^k\pm\partial_ty^j\cdot\partial_ty^k)\bigg]
\end{multline}
and we obtain $\tau(u)^i\equiv 3s^3v^i$,
which implies that $v^i$ should vanish in order for $\tau(u)^i=o(s^3)$.

\subsection{Second fundamental form}
\label{subsec:second-fundamental-form}

We have seen that $u$ satisfies $\tau(u)^I=o(s^3)$ if and only if
\eqref{eq:formally-determined-coefficients-unified-form} is satisfied.
To complete the proof of Theorem \ref{thm:main-theorem-in-action},
we compute the second fundamental form $\nabla du$ for such mappings $u$ modulo $o(s^2)$,
or equivalently, the components $\tensor{(\nabla du)}{_A_B^I}$ modulo $o(s)$,
where $A$, $B\in\set{s,t}$ are the indices associated with the upper-half plane coordinates on $Y$.

If we write $\Phi$ for the differential $du$ of the map
\begin{equation}
	\label{eq:third-order-critical-mapping}
	\begin{alignedat}{11}
		u^0 &= \hspace{.5pt}x &&= && \hspace{12pt}s\lambda && && +s^3x_3 && +o(s^3), \\
		u^i\hspace{.5pt} &= y^i &&= \gamma^i && && +\frac{1}{2}s^2\lambda^2\alpha^i && +s^3y_3^i && +o(s^3),
	\end{alignedat}
\end{equation}
modulo $o(s^2)$ we have
\begin{alignat}{2}
	\tensor{\Phi}{_s^0}&\equiv \lambda+3s^2x_3,&\qquad
	\tensor{\Phi}{_t^0}&\equiv s\partial_t\lambda\equiv -s\lambda\alpha(\dot{\gamma}),\\
	\tensor{\Phi}{_s^i}&\equiv s\lambda^2\alpha^i+3s^2y_3^i,&\qquad
	\tensor{\Phi}{_t^i}&\equiv \dot{\gamma}^i+\frac{1}{2}s^2\partial_t(\lambda^2\alpha^i)
	\equiv\dot{\gamma}^i-s^2\lambda^2\alpha(\dot{\gamma})\alpha^i
	+\frac{1}{2}s^2\lambda^2\partial_t\alpha^i.
\end{alignat}
Based on these formulae, we compute each component of $\nabla du=\nabla\Phi$ modulo $o(s)$ using
\begin{equation}
	\nabla_A\tensor{\Phi}{_B^I}
	=\partial_A\tensor{\Phi}{_B^I}-\tensor{\Gamma}{^C_A_B}\tensor{\Phi}{_C^I}
	+\tensor{\Gamma}{^I_J_K}\tensor{\Phi}{_A^J}\tensor{\Phi}{_B^K}.
\end{equation}
Note that $\tensor{\Gamma}{^C_A_B}$, the Christoffel symbol for $Y$, is given explicitly by
\begin{alignat}{3}
	\tensor{\Gamma}{^s_s_s}&=-\frac{1}{s},&\qquad
	\tensor{\Gamma}{^s_s_t}&=\tensor{\Gamma}{^s_t_s}=0,&\qquad
	\tensor{\Gamma}{^s_t_t}&=\pm\frac{1}{s},\\
	\tensor{\Gamma}{^t_s_s}&=0,&\qquad
	\tensor{\Gamma}{^t_s_t}&=\tensor{\Gamma}{^t_t_s}=-\frac{1}{s},&\qquad
	\tensor{\Gamma}{^t_t_t}&=0,
\end{alignat}
where the upper sign is for $Y=\mathbb{H}^2$ and the lower one is for $Y=\AdSplane$.

\begin{lem}
	\label{lem:second-fundamental-form}
	The components $\nabla_A\tensor{\Phi}{_B^I}$ are given by, modulo $o(s)$,
	\begin{gather}
		\nabla_s\tensor{\Phi}{_s^0}
		\equiv s(4x_3+\lambda^3\abs{\alpha}^2),\qquad
		\nabla_t\tensor{\Phi}{_t^0}
		\equiv \mp s(4x_3+\lambda^3\abs{\alpha}^2),\\
		\nabla_s\tensor{\Phi}{_t^0}
		\equiv \nabla_t\tensor{\Phi}{_s^0}
		\equiv 3s\lambda^{-1}\braket{\dot{\gamma},y_3},\\
		\nabla_s\tensor{\Phi}{_s^i}
		\equiv 3sy_3^i,\qquad
		\nabla_t\tensor{\Phi}{_t^i}
		\equiv \mp 3sy_3^i,\\
		\nabla_s\tensor{\Phi}{_t^i}
		\equiv\nabla_t\tensor{\Phi}{_s^i}
		\equiv s(-2\lambda^{-1}x_3\dot{\gamma}^i
		+\lambda^2(\nabla_{\dot{\gamma}}\alpha)^i
		-\lambda^2\alpha(\dot{\gamma})\alpha^i
		-\lambda^2\tensor{P}{_k^i}\dot{\gamma}^k).
	\end{gather}
\end{lem}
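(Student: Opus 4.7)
My plan is to compute each component by direct expansion of
\[
\nabla_A\tensor{\Phi}{_B^I}
=\partial_A\tensor{\Phi}{_B^I}-\tensor{\Gamma}{^C_A_B}\tensor{\Phi}{_C^I}
+(\tensor{\Gamma}{^I_J_K}\circ u)\tensor{\Phi}{_A^J}\tensor{\Phi}{_B^K},
\]
handling the three pieces separately and truncating modulo $o(s)$. The partial-derivative piece is immediate from the expressions for $\tensor{\Phi}{_A^I}$ displayed just above the lemma; the source correction uses the Christoffel symbols of $h_+$ listed there; and the target correction uses \eqref{eq:Christoffel-symbols-for-g+}, for which only the terms written out matter, since the omitted remainders contribute at order $o(s)$ in the relevant products $\tensor{\Phi}{_A^J}\tensor{\Phi}{_B^K}$. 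The final algebraic simplification rests on \eqref{eq:lambda-derivative}, \eqref{eq:lambda-twice-derivative}, and the identity \eqref{eq:Christoffel-symbol-formula}.

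Two shortcuts halve the work. First, because the coefficients of $u$ have been arranged so that $\tau(u)^I=o(s^3)$ by \S\ref{subsec:third-order-coefficients}, and because $h_+^{ss}=s^2$ and $h_+^{tt}=\pm s^2$ (upper sign for $\mathbb{H}^2$, lower for $\AdSplane$), the trace identity
\[
\tau(u)^I=s^2\bigl(\nabla_s\tensor{\Phi}{_s^I}\pm\nabla_t\tensor{\Phi}{_t^I}\bigr)=o(s^3)
\]
forces $\nabla_t\tensor{\Phi}{_t^I}\equiv\mp\nabla_s\tensor{\Phi}{_s^I}$ modulo $o(s)$, matching the sign pattern in the statement. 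Second, torsion-freeness of both Levi-Civita connections gives the Hessian symmetry $\nabla_A\tensor{\Phi}{_B^I}=\nabla_B\tensor{\Phi}{_A^I}$. Together these reduce the lemma to four separate computations: $\nabla_s\tensor{\Phi}{_s^0}$, $\nabla_s\tensor{\Phi}{_s^i}$, $\nabla_s\tensor{\Phi}{_t^0}$, and $\nabla_s\tensor{\Phi}{_t^i}$.

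The first three come out relatively cleanly. In $\nabla_s\tensor{\Phi}{_s^0}$, the singular source term $-\tensor{\Gamma}{^s_s_s}\tensor{\Phi}{_s^0}=s^{-1}(\lambda+3s^2x_3)$ cancels against the leading piece of $\tensor{\Gamma}{^0_0_0}(\tensor{\Phi}{_s^0})^2$; expanding both one order further yields $4sx_3$, while $\tensor{\Gamma}{^0_j_k}\tensor{\Phi}{_s^j}\tensor{\Phi}{_s^k}$ supplies $s\lambda^3\abs{\alpha}^2$. In $\nabla_s\tensor{\Phi}{_s^i}$, only $\partial_s^2 y^i\equiv 6sy_3^i$ survives the cancellations, leaving $3sy_3^i$. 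In $\nabla_s\tensor{\Phi}{_t^0}$, the $\lambda\alpha(\dot\gamma)$-type terms all cancel by \eqref{eq:lambda-derivative}, and the $3s\lambda^{-1}\braket{\dot\gamma,y_3}$ piece emerges from the $3s^2y_3^j$ correction in $\tensor{\Phi}{_s^j}$ inside $\tensor{\Gamma}{^0_j_k}\tensor{\Phi}{_s^j}\tensor{\Phi}{_t^k}$. The main obstacle is $\nabla_s\tensor{\Phi}{_t^i}$, in which all four contributions are active at order $s$: $\tensor{\Gamma}{^i_0_k}\tensor{\Phi}{_s^0}\tensor{\Phi}{_t^k}$ must produce both the Schouten piece $-s\lambda^2\tensor{P}{_k^i}\dot\gamma^k$ (via the $-x\tensor{P}{_k^i}$ correction in \eqref{eq:Christoffel-symbols-for-g+}) and the $-2s\lambda^{-1}x_3\dot\gamma^i$ piece (via the $3s^2x_3$ correction in $\tensor{\Phi}{_s^0}$ paired with the $-x^{-1}$ leading term expanded one order further), while $\tensor{\Gamma}{^i_j_k}\tensor{\Phi}{_s^j}\tensor{\Phi}{_t^k}$ combined with $\partial_s\tensor{\Phi}{_t^i}$ must regroup into $\lambda^2(\nabla_{\dot\gamma}\alpha)^i-\lambda^2\alpha(\dot\gamma)\alpha^i$ by means of $\partial_t\alpha^i+\tensor{(\Gamma^g)}{^i_j_k}\alpha^j\dot\gamma^k=(\nabla_{\dot\gamma}\alpha)^i$ and $\partial_t\lambda=-\lambda\alpha(\dot\gamma)$. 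The care required lies in tracking the cancellations among the singular $s^{-1}\dot\gamma^i$-type quantities so that the remaining order-$s$ residue has exactly the asserted form.
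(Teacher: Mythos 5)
Your proposal is correct, and the core of it is the same direct expansion of $\nabla_A\tensor{\Phi}{_B^I}=\partial_A\tensor{\Phi}{_B^I}-\tensor{\Gamma}{^C_A_B}\tensor{\Phi}{_C^I}+\tensor{\Gamma}{^I_J_K}\tensor{\Phi}{_A^J}\tensor{\Phi}{_B^K}$ that the paper carries out. The genuine difference is your first shortcut: since the map \eqref{eq:third-order-critical-mapping} has $\tau(u)^I=o(s^3)$ regardless of $x_3$ and $y_3^i$ (established in \S\ref{subsec:third-order-coefficients}), and $h_+^{ss}=s^2$, $h_+^{tt}=\pm s^2$, the trace relation $\tau(u)^I=s^2(\nabla_s\tensor{\Phi}{_s^I}\pm\nabla_t\tensor{\Phi}{_t^I})$ hands you $\nabla_t\tensor{\Phi}{_t^I}\equiv\mp\nabla_s\tensor{\Phi}{_s^I}$ modulo $o(s)$ for free. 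The paper does not exploit this: it computes $\nabla_t\tensor{\Phi}{_t^0}$ from scratch, which is its single longest calculation in the proof (requiring \eqref{eq:Christoffel-symbol-formula}, \eqref{eq:lambda-derivative}, and the full expansion of $\tensor{\Gamma}{^0_j_k}(u(s,t))$), and declares $\nabla_t\tensor{\Phi}{_t^i}$ "relatively easy" and omits it. Your route trades that calculation for an appeal to a fact already proved, which is both shorter and a useful consistency check on $\nabla_s\tensor{\Phi}{_s^0}$ and $\nabla_s\tensor{\Phi}{_s^i}$. The remaining four components you compute match the paper's: the cancellation of the $s^{-1}$-singular pieces in $\nabla_s\tensor{\Phi}{_s^0}$ leaving $4sx_3$, the emergence of $3s\lambda^{-1}\braket{\dot{\gamma},y_3}$ from the $3s^2y_3^j$ correction inside $\tensor{\Gamma}{^0_j_k}\tensor{\Phi}{_s^j}\tensor{\Phi}{_t^k}$, and the bookkeeping in $\nabla_s\tensor{\Phi}{_t^i}$ (the Schouten term from $-x\tensor{P}{_k^i}$, the $-2s\lambda^{-1}x_3\dot{\gamma}^i$ from the second-order expansion of $x^{-1}\tensor{\Phi}{_s^0}$, and the regrouping into $\lambda^2(\nabla_{\dot{\gamma}}\alpha)^i-\lambda^2\alpha(\dot{\gamma})\alpha^i$ via \eqref{eq:lambda-derivative}) are all exactly where the content lies in the paper's version. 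One minor phrasing quibble: in $\nabla_s\tensor{\Phi}{_s^i}$ it is not literally true that only $\partial_s^2y^i\equiv 6sy_3^i$ "survives" — the three contributions $6sy_3^i$, $3sy_3^i$, and $-6sy_3^i$ sum to $3sy_3^i$ — but your conclusion is right.
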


\begin{proof}
	We omit the computation for
	$\nabla_s\tensor{\Phi}{_t^0}$, $\nabla_s\tensor{\Phi}{_s^i}$ and $\nabla_t\tensor{\Phi}{_t^i}$,
	which are relatively easy.

	Everything is calculated modulo $o(s)$ in what follows.
	Then, first, the component $\nabla_s\tensor{\Phi}{_s^0}
	=\partial_s\tensor{\Phi}{_s^0}
	-\tensor{\Gamma}{^s_s_s}\tensor{\Phi}{_s^0}
	+\tensor{\Gamma}{^0_0_0}\tensor{\Phi}{_s^0}\tensor{\Phi}{_s^0}
	+\tensor{\Gamma}{^0_j_k}\tensor{\Phi}{_s^j}\tensor{\Phi}{_s^k}$ equals
	\begin{equation}
		\partial_s\tensor{\Phi}{_s^0}-\left(-\frac{1}{s}\right)\tensor{\Phi}{_s^0}
		+\left(-\frac{1}{x}\right)\tensor{\Phi}{_s^0}\tensor{\Phi}{_s^0}
		+\frac{1}{x}g_{jk}(y(s,t))\cdot\tensor{\Phi}{_s^j}\tensor{\Phi}{_s^k}
	\end{equation}
	or
	\begin{equation}
		6sx_3-\left(-\frac{1}{s}\right)(\lambda+3s^2x_3)
		+\left(-\frac{1}{s\lambda+s^3x_3}\right)(\lambda+3s^2x_3)^2
		+\frac{1}{s\lambda}g_{jk}\cdot s^2\lambda^4\alpha^j\alpha^k,
	\end{equation}
	which simplifies to $s(4x_3+\lambda^3\abs{\alpha}^2)$.
	
	Next,
	$\nabla_t\tensor{\Phi}{_t^0}
	=\partial_t\tensor{\Phi}{_t^0}
	-\tensor{\Gamma}{^s_t_t}\tensor{\Phi}{_s^0}
	+\tensor{\Gamma}{^0_0_0}\tensor{\Phi}{_t^0}\tensor{\Phi}{_t^0}
	+\tensor{\Gamma}{^0_j_k}\tensor{\Phi}{_t^j}\tensor{\Phi}{_t^k}$ equals
	\begin{equation}
		\partial_t\tensor{\Phi}{_t^0}-\left(\pm\frac{1}{s}\right)\cdot\tensor{\Phi}{_s^0}
		+\left(-\frac{1}{x}\right)\tensor{\Phi}{_t^0}\tensor{\Phi}{_t^0}
		+\frac{1}{x}g_{jk}(y(s,t))\cdot\tensor{\Phi}{_t^j}\tensor{\Phi}{_t^k},
	\end{equation}
	which is equal to
	\begin{multline}
		{-s\partial_t(\lambda\alpha(\dot{\gamma}))}\mp\frac{1}{s}(\lambda+3s^2x_3)
		+\left(-\frac{1}{s\lambda}\right)\cdot s^2\lambda^2\alpha(\dot{\gamma})^2 \\
		+\frac{1}{s\lambda+s^3x_3}
		\left(g_{jk}+\frac{\partial g_{jk}}{\partial y^l}\cdot\frac{s^2}{2}\lambda^2\alpha^l\right)
		(\dot{\gamma}^j\dot{\gamma}^k-2s^2\lambda^2\alpha(\dot{\gamma})\dot{\gamma}^j\alpha^k
		+s^2\lambda^2\dot{\gamma}^j\partial_t\alpha^k).
	\end{multline}
	Consequently, using \eqref{eq:Christoffel-symbol-formula} we obtain
	\begin{multline}
		\nabla_t\tensor{\Phi}{_t^0}
		\equiv
		-s(\partial_t\lambda)\alpha(\dot{\gamma})
		-s\lambda(\nabla_{\dot{\gamma}}\alpha)(\dot{\gamma})
		-s\lambda\alpha(\ddot{\gamma})
		\mp\frac{1}{s}(\lambda+3s^2x_3)
		+\left(-\frac{1}{s\lambda}\right)\cdot s^2\lambda^2\alpha(\dot{\gamma})^2 \\
		+\frac{1}{s\lambda+s^3x_3}
		\cdot g_{jk}
		(\dot{\gamma}^j\dot{\gamma}^k-2s^2\lambda^2\alpha(\dot{\gamma})\dot{\gamma}^j\alpha^k
		+s^2\lambda^2\dot{\gamma}^j\partial_t\alpha^k)
		+\frac{1}{s\lambda}\cdot s^2\lambda^2\alpha^l\cdot
		g_{jm}\tensor{\Gamma}{^m_k_l}\dot{\gamma}^j\dot{\gamma}^k.
	\end{multline}
	We can simplify this using \eqref{eq:lambda-derivative} to get
	$\nabla_t\tensor{\Phi}{_t^0}\equiv \mp s(4x_3+\lambda^3\abs{\alpha}^2)$.
	
	Lastly,
	$\nabla_s\tensor{\Phi}{_t^i}
	=\partial_s\tensor{\Phi}{_t^i}
	-\tensor{\Gamma}{^t_s_t}\tensor{\Phi}{_t^i}
	+\tensor{\Gamma}{^i_0_k}\tensor{\Phi}{_s^0}\tensor{\Phi}{_t^k}
	+\tensor{\Gamma}{^i_0_k}\tensor{\Phi}{_t^0}\tensor{\Phi}{_s^k}
	+\tensor{\Gamma}{^i_j_k}\tensor{\Phi}{_s^j}\tensor{\Phi}{_t^k}$ equals
	\begin{equation}
		\partial_s\tensor{\Phi}{_t^i}
		-\left(-\frac{1}{s}\right)\tensor{\Phi}{_t^i}
		+\left(-\frac{1}{x}\tensor{\delta}{_k^i}-x\tensor{P}{_k^i}\right)\tensor{\Phi}{_s^0}\tensor{\Phi}{_t^k}
		+\left(-\frac{1}{x}\tensor{\delta}{_k^i}-x\tensor{P}{_k^i}\right)\tensor{\Phi}{_t^0}\tensor{\Phi}{_s^k}
		+\tensor{(\Gamma^g)}{^i_j_k}\tensor{\Phi}{_s^j}\tensor{\Phi}{_t^k},
	\end{equation}
	which equals
	\begin{multline}
		(-2s\lambda^2\alpha(\dot{\gamma})\alpha^i+s\lambda^2\partial_t\alpha^i)
		+\frac{1}{s}
		\left(\dot{\gamma}^i-s^2\lambda^2\alpha(\dot{\gamma})\alpha^i+\frac{1}{2}s^2\lambda^2\partial_t\alpha^i\right)
		\\
		-\left(\frac{1}{s\lambda+s^3x_3}\tensor{\delta}{_k^i}+s\lambda\tensor{P}{_k^i}\right)
		(\lambda+3s^2x_3)
		\left(\dot{\gamma}^k-s^2\lambda^2\alpha(\dot{\gamma})\alpha^k
		+\frac{1}{2}s^2\lambda^2\partial_t\alpha^k\right) \\
		-\frac{1}{s\lambda}\tensor{\delta}{_k^i}
		\cdot(-s\lambda\alpha(\dot{\gamma}))\cdot s\lambda^2\alpha^k
		+\tensor{(\Gamma^g)}{^i_j_k}\cdot s\lambda^s\alpha^j\cdot\dot{\gamma}^k.
	\end{multline}
	It is straightforward to see that this expression equals the one in the last line of the lemma.
\end{proof}

By Lemma \ref{lem:second-fundamental-form}, $\nabla_A\tensor{\Phi}{_B^I}=O(s)$ is always true,
and $\nabla_A\tensor{\Phi}{_B^I}=o(s)$ if and only if
$4x_3+\lambda^3\abs{\alpha}^2=0$, $y_3^i=0$, and
\begin{equation}
	\label{eq:asymptotic-total-geodesicness-second}
	-2\lambda^{-1}x_3\dot{\gamma}^i
	+\lambda^2(\nabla_{\dot{\gamma}}\alpha)^i
	-\lambda^2\alpha(\dot{\gamma})\alpha^i
	-\lambda^2\tensor{P}{_k^i}\dot{\gamma}^k=0
\end{equation}
are satisfied.
The first equality shows that $x_3=-\frac{1}{4}\lambda^3\abs{\alpha}^2$.
Then the last equality is equivalent to
\begin{equation}
	\frac{1}{2}\abs{\alpha}^2\dot{\gamma}^i
	+(\nabla_{\dot{\gamma}}\alpha)^i
	-\alpha(\dot{\gamma})\alpha^i
	-\tensor{P}{_k^i}\dot{\gamma}^k=0,
\end{equation}
which is nothing but \eqref{eq:conformal-geodesics-in-terms-of-one-form-2}.
In view of the equivalence between \eqref{eq:conformal-geodesics-in-terms-of-one-form-1} and
\eqref{eq:one-form-alpha}, the latter of which is the definition of $\alpha$ in the current context,
this finishes the proof of Theorem \ref{thm:main-theorem-in-action},
and hence that of Theorem \ref{thm:main} (2).

\subsection{Asymptotic isometricity}

To close the circle of ideas, let us compute $u^*g_+$ for the map $u$ that we have constructed.
We will do this by allowing $o(s^2)$ errors with respect to $h_+$,
or equivalently, $o(1)$ errors in each coefficient of the expression in terms of $ds$ and $dt$.
This happens to be nothing but the computation omitted in the proof of \cite{Fine-Herfray-22}*{Theorem 5}.
To reveal some subtleties, we begin by assuming only that $u$ is given by \eqref{eq:third-order-critical-mapping},
with letting $x_3$ and $y_3^i$ free.

From \eqref{eq:second-order-expansion-of-target-metric}, we have
\begin{equation}
	u^*g_+=\frac{dx^2+(g_{ij}(y)-x^2P_{ij}(y)+o(x^2))dy^idy^j}{x^2}.
\end{equation}
For the mapping given by \eqref{eq:third-order-critical-mapping}, this becomes
\begin{multline}
	\label{eq:pullback-metric-for-spacelike-case}
	u^*g_+
	\equiv
	\frac{1}{s^2\lambda^2(1+s^2\lambda^{-1}x_3)^2}
	\bigg[(\lambda\,ds+s\partial_t\lambda\,dt+3s^2x_3\,ds)^2\\
	+\left(g_{ij}+\frac{\partial g_{ij}}{\partial y^k}\cdot \frac{s^2}{2}\lambda^2\alpha^k-s^2\lambda^2P_{ij}\right)dy^idy^j\bigg]
\end{multline}
modulo allowed errors, where
\begin{equation}
	dy^i
	\equiv\dot{\gamma}^idt+s\lambda^2\alpha^ids+3s^2y_3^ids+s^2\lambda(\partial_t\lambda)\alpha^idt
	+\frac{1}{2}s^2\lambda^2\partial_t\alpha^idt.
\end{equation}
We compute that
\begin{multline}
	g_{ij}\,dy^idy^j
	\equiv
	\pm\lambda^2dt^2+s\lambda^2\alpha(\dot{\gamma})(ds\,dt+dt\,ds)\\
	+s^2(2\lambda(\partial_t\lambda)\alpha(\dot{\gamma})
	+\lambda^2g_{ij}\dot{\gamma}^i\partial_t\alpha^j)dt^2
	+s^2\lambda^4\abs{\alpha}^2ds^2
	+3s^2\braket{\dot{\gamma},y_3}(ds\,dt+dt\,ds),
\end{multline}
and
\begin{equation}
	\left(\frac{\partial g_{ij}}{\partial y^k}\cdot \frac{s^2}{2}\lambda^2\alpha^k-s^2\lambda^2P_{ij}\right)dy^idy^j
	\equiv
	s^2\lambda^2
	(g_{jl}\dot{\gamma}^j\tensor{\Gamma}{^l_i_k}\dot{\gamma}^i\alpha^k-P_{ij}\dot{\gamma}^i\dot{\gamma}^j)dt^2.
\end{equation}
Consequently, the second term in the bracket in the right-hand side of \eqref{eq:pullback-metric-for-spacelike-case} is,
modulo allowed errors,
\begin{multline}
	{\pm\lambda^2dt^2}+s\lambda^2\alpha(\dot{\gamma})(ds\,dt+dt\,ds)\\
	+s^2(2\lambda(\partial_t\lambda)\alpha(\dot{\gamma})
	+\lambda^2(\nabla_{\dot{\gamma}}\alpha)(\dot{\gamma}))dt^2
	+s^2\lambda^4\abs{\alpha}^2ds^2
	-s^2\lambda^2P_{ij}\dot{\gamma}^i\dot{\gamma}^jdt^2\\
	+3s^2\braket{\dot{\gamma},y_3}(ds\,dt+dt\,ds).
\end{multline}
Therefore,
\begin{equation}
	\begin{split}
		u^*g_+
		&\equiv
		\frac{1}{s^2\lambda^2}\bigg[
		\lambda^2ds^2+s\lambda\partial_t\lambda(ds\,dt+dt\,ds)+s^2(\partial_t\lambda)^2dt^2+6s^2\lambda x_3\,ds^2\\
		&\hspace{4em}\pm\lambda^2dt^2+s\lambda^2\alpha(\dot{\gamma})(ds\,dt+dt\,ds)\\
		&\hspace{4em}+s^2(2\lambda(\partial_t\lambda)\alpha(\dot{\gamma})
		+\lambda^2(\nabla_{\dot{\gamma}}\alpha)(\dot{\gamma}))dt^2
		+s^2\lambda^4\abs{\alpha}^2ds^2-s^2\lambda^2P_{ij}\dot{\gamma}^i\dot{\gamma}^jdt^2\\
		&\hspace{4em}+3s^2\braket{\dot{\gamma},y_3}(ds\,dt+dt\,ds)\bigg] \\
		&\hspace{2em}-\frac{2x_3}{\lambda}(ds^2\pm dt^2),
	\end{split}
\end{equation}
and by \eqref{eq:lambda-derivative} we obtain
\begin{multline}
	\label{eq:pullback-metric-calculated}
	u^*g_+
	\equiv
	\frac{ds^2\pm dt^2}{s^2}
	+\frac{1}{\lambda^2}\bigg[
	(4\lambda x_3+\lambda^4\abs{\alpha}^2)ds^2\\
	+((\partial_t\lambda)^2
	\mp 2\lambda x_3
	+2\lambda(\partial_t\lambda)\alpha(\dot{\gamma})
	+\lambda^2(\nabla_{\dot{\gamma}}\alpha)(\dot{\gamma})
	-\lambda^2P_{ij}\dot{\gamma}^i\dot{\gamma}^j)dt^2+3\braket{\dot{\gamma},y_3}(ds\,dt+dt\,ds)\bigg].
\end{multline}

It follows that $u^*g_+-h_+$ is $o(s^2)$ with respect to $h_+$ if and only if
$4x_3+\lambda^3\abs{\alpha}^2=0$, $y_3^i=0$, and
\begin{equation}
	\label{eq:asymptotic-isometricity-second}
	(\partial_t\lambda)^2
	\mp 2\lambda x_3
	+2\lambda(\partial_t\lambda)\alpha(\dot{\gamma})
	+\lambda^2(\nabla_{\dot{\gamma}}\alpha)(\dot{\gamma})
	-\lambda^2P_{ij}\dot{\gamma}^i\dot{\gamma}^j=0
\end{equation}
are satisfied.
If we set $x_3=-\frac{1}{4}\lambda^3\abs{\alpha}^2$,
then \eqref{eq:asymptotic-isometricity-second} is equivalent to
\begin{equation}
	\label{eq:asymptotic-isometricity-second-rewritten}
	\pm\frac{1}{2}\lambda^2\abs{\alpha}^2
	-\alpha(\dot{\gamma})^2
	+(\nabla_{\dot{\gamma}}\alpha)(\dot{\gamma})
	-P(\dot{\gamma},\dot{\gamma})=0.
\end{equation}
This is equation \eqref{eq:conformal-geodesics-in-terms-of-one-form-2} restricted in the direction
tangent to $\gamma$. In particular, we have shown the following result.

\begin{thm}
	Let $g_+$ be a conformally compact Einstein metric \eqref{eq:Graham-Lee-form-for-target},
	where $g_x$ is given as a formal polyhomogeneous series, with $g=g_0$ possibly indefinite.
	Suppose that $u$ is a proper mapping from $Y=\mathbb{H}^2$ (resp.\ $Y=\AdSplane$) to $(X,g_+)$,
	also given as formal polyhomogeneous series,
	such that $\gamma\colon\bdry_\infty Y\to M$ is everywhere spacelike (resp.\ timelike).
	If $u$ satisfies $\tau(u)=o(s^2)$,
	then it is always true that $u^*g_+-h_+=O(s^2)$.
	If moreover $\nabla du=o(s^2)$ holds, then $u^*g_+-h_+=o(s^2)$.
\end{thm}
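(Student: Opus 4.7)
The plan is to compute $u^*g_+$ directly by substituting the explicit form of $u$ produced by the preceding analysis. Under the hypothesis $\tau(u)=o(s^2)$, Theorem~\ref{thm:main-theorem-in-action} forces $u$ to take the form~\eqref{eq:third-order-critical-mapping}, with only $x_3$ and $y_3^i$ still free through order $s^3$. I would then evaluate $u^*g_+ = x^{-2}(dx^2+g_x(y)(dy,dy))$ using the Einstein expansion $g_x = g - x^2 P + o(x^2)$ from~\eqref{eq:second-order-expansion-of-target-metric}, keeping errors that are $o(s^2)$ with respect to $h_+$, equivalently $o(1)$ per coefficient of $ds^2$, $dt^2$, and $ds\,dt$.

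The computation proceeds much like that of Lemma~\ref{lem:second-fundamental-form}: expand $dx$ and $dy^i$ to second order in $s$ using~\eqref{eq:third-order-critical-mapping}; expand $g_{ij}(y)$ around $\gamma(t)$ using $y_2^i = \tfrac{1}{2}\lambda^2\alpha^i$ and convert the resulting $\partial_k g_{ij}$ contributions to Christoffel symbols through~\eqref{eq:Christoffel-symbol-formula}; use~\eqref{eq:lambda-derivative} to eliminate $\partial_t\lambda$; and divide through by $s^2\lambda^2(1+s^2\lambda^{-1}x_3)^2$. The $s^{-2}$ leading piece produces exactly $h_+=(ds^2\pm dt^2)/s^2$, and the remainder is an $s$-independent symmetric 2-tensor whose $ds^2$, $ds\,dt$, and $dt^2$ coefficients are the ones recorded in~\eqref{eq:pullback-metric-calculated}. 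Boundedness of this remainder immediately yields the first assertion $u^*g_+ - h_+ = O(s^2)$.

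For the second assertion, I would invoke the analysis of \S\ref{subsec:second-fundamental-form}: the additional hypothesis $\nabla du=o(s^2)$ forces $x_3 = -\tfrac{1}{4}\lambda^3\abs{\alpha}^2$, $y_3^i = 0$, and the conformal geodesic equation~\eqref{eq:conformal-geodesics-in-terms-of-one-form-2} for $\gamma$. The first two conditions annihilate the $ds^2$ coefficient and both $ds\,dt$ coefficients in~\eqref{eq:pullback-metric-calculated}, while substituting $x_3 = -\tfrac{1}{4}\lambda^3\abs{\alpha}^2$ into the $dt^2$ coefficient reduces it to~\eqref{eq:asymptotic-isometricity-second-rewritten}, which is precisely~\eqref{eq:conformal-geodesics-in-terms-of-one-form-2} contracted with $\dot{\gamma}$ and therefore vanishes. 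This gives $u^*g_+ - h_+ = o(s^2)$.

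The main obstacle is bookkeeping rather than anything conceptual, since the algebraic work overlaps substantially with the tension field and second-fundamental-form computations of the preceding subsections; the principal concern is keeping the $\pm$ sign distinction between the $\mathbb{H}^2$ and $\AdSplane$ cases consistent as it propagates through $\abs{\dot{\gamma}}^2 = \pm\lambda^2$ into the $P(\dot{\gamma},\dot{\gamma})$ and $\abs{\alpha}^2$ contributions. The conceptual takeaway worth highlighting is that asymptotic isometricity and asymptotic total geodesicness are enforced by exactly the same data on $\gamma$, with the former depending only on the $\dot{\gamma}$-contracted form of~\eqref{eq:conformal-geodesics-in-terms-of-one-form-2} rather than its full vector-valued content.
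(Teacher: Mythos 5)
Your proposal is correct and follows essentially the same route as the paper: both reduce to the explicit form \eqref{eq:third-order-critical-mapping} forced by $\tau(u)^I=o(s^3)$, compute $u^*g_+$ to the stated order to arrive at \eqref{eq:pullback-metric-calculated}, and then feed in the conditions $x_3=-\tfrac14\lambda^3\abs{\alpha}^2$, $y_3^i=0$, and the $\dot{\gamma}$-contraction of \eqref{eq:conformal-geodesics-in-terms-of-one-form-2} supplied by the second-fundamental-form analysis of \S\ref{subsec:second-fundamental-form}. Your closing observation that asymptotic isometricity only sees the tangentially contracted conformal geodesic equation matches the paper's own remark contrasting this with the Fine--Herfray approach.
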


In the approach of Fine--Herfray \cite{Fine-Herfray-22} based on the renormalized area of surfaces,
the tangential component of \eqref{eq:conformal-geodesics-in-terms-of-one-form-2} is obtained this way,
i.e., imposing the isometricity modulo $o(s^2)$ as a condition that the parametrization of
the $\Aren$-critical surface is required to satisfy.
In fact, the second fundamental form condition
in \eqref{eq:isometric-totally-geodesic-to-third-order-Fine-Herfray} of Theorem \ref{thm:Fine-Herfray} only provides
the normal component of \eqref{eq:conformal-geodesics-in-terms-of-one-form-2},
and hence it should be paired with the asymptotic isometricity condition.
This is not the case in our approach.

\appendix

\section{Renormalized energy}
\label{sec:renormalized-energy}

We discuss the notion of the renormalized energy of proper harmonic maps between
conformally compact Einstein manifolds.
It will be observed that the formal polyhomogeneous harmonic map $u\colon\mathbb{H}^2\to X$
satisfying $\nabla du=o(e^{-2r})$ is necessarily formally critical with respect to the renormalized energy $\Eren$.

\subsection{Renormalized volume and area}

Let us first recall the renormalized volume and area defined by
Henningson--Skenderis \cite{Henningson-Skenderis-98},
Graham--Witten \cite{Graham-Witten-99}, and Graham \cite{Graham-00}.

The asymptotic behavior of the volume of the region
$X_\varepsilon=\set{x>\varepsilon}$ as $\varepsilon\to +0$ can be described as follows.
From \eqref{eq:Einstein-metric-even-expansion} and the fact that $g_n$ is trace-free for $n$ odd, it can be seen that
\begin{equation}
	\frac{dV_{g_x}}{dV_g}=
	1+\sum_{\substack{2\leqq k\leqq n \\ \text{$k$ even}}}x^kv_k+o(x^n),
\end{equation}
where all the displayed coefficients $v_k$ are formally determined.
Since $dV_{g_+}=x^{-n-1}dx\,dV_{g_x}$, we obtain the following
asymptotic expansion of the volume of $X_\varepsilon$,
in which the constant term is called the \emph{renormalized volume} of $(X,g_+)$ and denoted by $\Vren$:
\begin{equation}
	\int_{X_\varepsilon}dV_{g_+}
	=
	\begin{cases}
		\displaystyle
		\sum_{\nu=0}^{(n-1)/2}\dfrac{V_{2\nu}}{\varepsilon^{n-2\nu}}+\Vren+o(1) & (\text{$n$ odd}), \\[10pt]
		\displaystyle
		\sum_{\nu=0}^{n/2-1}\dfrac{V_{2\nu}}{\varepsilon^{n-2\nu}}
		+L\log\dfrac{1}{\varepsilon}+\Vren+o(1) & (\text{$n$ even}).
	\end{cases}
\end{equation}
For $n$ odd, it is known that $\Vren$
is invariant under changes of the normalization \eqref{eq:Graham-Lee-normal-form}
because of the following lemma,
while for $n$ even we cannot expect the same invariance due to the existence of the log term
(but alternatively, $L$ becomes an invariant).

\begin{lem}[Graham \cite{Graham-00}]
	\label{lem:Graham-lemma-for-changing-GL-normalization}
	For $n$ odd,
	let $x$ and $\Hat{x}$ be the special boundary defining functions associated with two different
	Graham--Lee normalizations \eqref{eq:Graham-Lee-normal-form}. Then
	\begin{equation}
		\Hat{x}=xb
	\end{equation}
	for a smooth function $b$ on $[0,x_0)\times M$ such that $b|_{\set{0}\times M}$ is positive everywhere
	whose Taylor expansion at $x=0$ consists only of
	even powers of $x$ up to (and including) the $x^{n+1}$ term.
\end{lem}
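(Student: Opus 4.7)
The plan is to recast the question as a first-order PDE for $\psi:=\log b$ and then propagate the parity of the Fefferman--Graham expansion to $b$ by formal uniqueness. A boundary defining function $\hat{x}$ comes from a Graham--Lee normalization if and only if it satisfies the eikonal condition $|d\log\hat{x}|^2_{g_+}=1$ near $M$. Writing $\hat{x}=xb$ with $\psi:=\log b$ and expanding in coordinates where $g_+=x^{-2}(dx^2+g_x)$, the identity
\[
|d\log\hat{x}|^2_{g_+}=1+2x\,\partial_x\psi+x^2(\partial_x\psi)^2+x^2\,g_x^{ij}\,\partial_i\psi\,\partial_j\psi
\]
reduces the eikonal (after dividing by $x$) to
\begin{equation}
\label{eq:plan-psi-pde}
2\,\partial_x\psi+x(\partial_x\psi)^2+x\,g_x^{ij}\,\partial_i\psi\,\partial_j\psi=0
\end{equation}
with Cauchy datum $\psi|_{x=0}=\log b_0$, where $b_0^2 g_0$ is the new boundary conformal representative. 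Matching coefficients of $x^{k-1}$ in \eqref{eq:plan-psi-pde} yields a recurrence
\[
2k\,\psi_k+F_k(\psi_0,\dotsc,\psi_{k-1};\,g_0,\dotsc,g_{k-2})=0
\]
that uniquely determines each Taylor coefficient $\psi_k$ from $\psi_0$ and from the Taylor coefficients of $g_x$ of order $\leqq k-2$; in particular $\psi_1=0$ and $\psi_2=-\tfrac14|d\psi_0|^2_{g_0}$.

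The decisive step is a parity argument. For $n$ odd, expansion \eqref{eq:Einstein-metric-even-expansion} carries no logarithmic term and its first potentially odd contribution is $x^n g_n$; hence $g_{-x}^{ij}=g_x^{ij}+O(x^n)$. Consequently $\chi(x,y):=\psi(-x,y)$ satisfies \eqref{eq:plan-psi-pde} modulo an error $O(x^{n+1})$ and shares the initial datum $\chi|_{x=0}=\psi_0$. At every order $k\leqq n+1$, the coefficients $g_m$ entering the recurrence for $\chi_k$ (and for $\psi_k$) satisfy $m\leqq k-2\leqq n-1$, and are either of even index or identically zero; in particular they take the same value in $g_x$ and in $g_{-x}$. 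Formal uniqueness thus forces $\chi_k=\psi_k$ for every $k\leqq n+1$; combined with the tautology $\chi_k=(-1)^k\psi_k$, this gives $\psi_k=0$ for every odd $k\in\set{1,3,\dotsc,n}$. Exponentiating transports this parity to $b$ through order $x^{n+1}$, the positivity $b|_{x=0}=e^{\psi_0}>0$ is automatic, and the smoothness of $b$ follows from $\hat{x}=xb$ with both $\hat{x}$ and $x$ smooth boundary defining functions.

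The main obstacle I anticipate is the order-counting in the parity step: one must verify that the Taylor coefficients of $g_x$ entering the recurrence for $\psi_k$ with $k\leqq n+1$ all have index $\leqq n-1$, so that the first odd-parity coefficient $g_n$ never participates. This is precisely where the odd-$n$ hypothesis is used, and it mirrors the dichotomy between the invariance of $\Vren$ for odd $n$ and the appearance of the anomaly $L$ for even $n$.
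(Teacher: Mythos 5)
Your proof is correct and is essentially Graham's original argument, which the paper only cites rather than reproduces: the eikonal reduction $|d\log\hat{x}|^2_{g_+}=1$ to $2\partial_x\psi+x(\partial_x\psi)^2+xg_x^{ij}\partial_i\psi\partial_j\psi=0$, the order-by-order recurrence, and the parity-via-formal-uniqueness step are exactly the standard route. The order counting ($m\leqq k-2\leqq n-1$, so $g_n$ never enters for $k\leqq n+1$) correctly isolates where the odd-$n$ hypothesis is used, and the argument checks out.
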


The renormalized area of a minimal submanifold $Y$ in a conformally compact Einstein manifold $(X,g_+)$
is defined similarly.
First, let $N\subset M$ be an arbitrary submanifold of dimension $q\geqq 1$ in the boundary,
and we take local coordinates
$(t^1,\dots,t^q,u^1,\dots,u^{n-q})$ for $M$ so that $u=0$ defines $N$ and
$\partial_{t^i}\perp\partial_{u^j}$ with respect to the induced conformal class on $M$.
Then we consider submanifolds $\overline{Y}\subset\overline{X}$ of dimension $q+1$ that meet $M$ transversely
and satisfy $\overline{Y}\cap M=N$.
Taking a Graham--Lee normalization,
we may express $\overline{Y}$, near the boundary, as a graph of a function $u(t,x)$
satisfying the boundary condition $u(t,0)=0$.
Now we impose the condition that $Y=\overline{Y}\cap X$ should be a minimal submanifold of $X$.
If we presuppose that $u$ has a polyhomogeneous expansion, then by appealing to the minimal submanifold equation,
one finds that
\begin{equation}
	u=
	\begin{cases}
		x^2u_2+\dotsb+(\text{even powers})+\dotsb+x^{q+1}u_{q+1}+x^{q+2}u_{q+2}+o(x^{q+2}) & (\text{$q$ odd}), \\
		x^2u_2+\dotsb+(\text{even powers})+\dotsb+x^{q+2}u_{q+2}+x^{q+2}\log x\cdot v+o(x^{q+2}) & (\text{$q$ even}).
	\end{cases}
\end{equation}
After some further calculations, one can find that the asymptotic behavior of
the $(q+1)$-dimensional area of $Y_\varepsilon=Y\cap\set{x>\varepsilon}$ as $\varepsilon\to +0$ is
\begin{equation}
	\int_{Y_\varepsilon}dA
	=
	\begin{cases}
		\displaystyle
		\sum_{\nu=0}^{(q-1)/2}\dfrac{A_{2\nu}}{\varepsilon^{n-2\nu}}+\Aren+o(1) & (\text{$q$ odd}), \\[10pt]
		\displaystyle
		\sum_{\nu=0}^{q/2-1}\dfrac{A_{2\nu}}{\varepsilon^{n-2\nu}}+K\log\dfrac{1}{\varepsilon}+\Aren+o(1) & (\text{$q$ even}).
	\end{cases}
\end{equation}
As before, $\Aren$ is called the \emph{renormalized area} of $Y$, which is in fact independent of
the normalization \eqref{eq:Graham-Lee-normal-form} for $q$ odd.

\subsection{Renormalized energy}
\label{subsec:renormalized-energy}

Let $(Y^{q+1},h_+)$ and $(X^{n+1},g_+)$ be two conformally compact Einstein manifolds,
where $q\geqq 1$ and $n\geqq 2$.
Our notation here follows \S\ref{subsec:prelim-on-harmonic-maps},
and we assume that $h_s$ and $g_x$ admit polyhomogeneous expansions.
We do not allow $h_s$ and $g_x$ to be mere formal polyhomogeneous series.

Generally, a proper smooth mapping $u\colon Y\to X$ that extends to
a continuous mapping $\overline{Y}\to\overline{X}$ may have infinite Dirichlet energy
\begin{equation}
	\mathcal{E}(u)=\frac{1}{2}\int_Y\abs{du}^2dV_{h_+}.
\end{equation}
To introduce an energy renormalization, we assume that $u$ admits polyhomogeneous expansion
\eqref{eq:polyhomogeneous-expansion-of-map}.
Furthermore, we need the evenness and the regularity of the expansion in the sense of
the following definition.

\begin{dfn}
	\label{dfn:evenness-of-maps}
	Let $u\colon (Y^{q+1},h_+)\to (X^{n+1},g_+)$ be
	a smooth proper mapping
	admitting polyhomogeneous expansion \eqref{eq:polyhomogeneous-expansion-of-map}
	between conformally compact Einstein manifolds.
	We say that $u$ has \emph{even} expansion if it is of the form
	\begin{equation}
		u^0=\sum_{\substack{1\leqq k\leqq q+1 \\ \text{$k$ odd}}}
		s^ku_k^0+o(s^{q+1}),\qquad
		u^i=\varphi^i+\sum_{\substack{1\leqq k\leqq q+1 \\ \text{$k$ even}}}
		s^ku_k^i+o(s^{q+1}).
	\end{equation}
	Moreover, we say $u$ is \emph{regular} if $\varphi$ has nowhere vanishing differential and
	$u_1^0$ is nowhere vanishing.
\end{dfn}

Of course, the above definition is motivated by
the following observation on harmonic maps between conformally compact Einstein manifolds.
Formula \eqref{eq:harmonic-map-equation-principal-part} implies that the regularity condition is satisfied
if $d\varphi$ is nowhere vanishing and $\tau(u)=o(1)$.

To keep the description of our energy renormalization less complicated,
from now on, let us focus on the case where $q$ is odd.
(This assumption must be introduced at some point anyway, because only for this case
the renormalized energy $\Eren$ can be defined invariantly.)
We moreover assume that $n>q$, and that
$h_s$ has \emph{strongly even} expansion in the sense that
\begin{equation}
	h_s\sim h+\sum_{\nu=1}^\infty s^{2\nu}h_{2\nu},
\end{equation}
which in fact follows if we assume the vanishing of the formally undetermined $q$-th order term
in the middle line of \eqref{eq:Einstein-metric-even-expansion}.
It follows from Lemma \ref{lem:Graham-lemma-for-changing-GL-normalization}
that the vanishing of the formally undetermined term
does not depend on any particular choice of the Graham--Lee normalization.
If $h_s$ has a strongly even expansion, then all $h_{2\nu}$ are formally determined by $h$.

In the following proposition, we call a coefficient of $u$ \emph{formally determined} when
it has a local expression in terms of $\varphi$, $h$, and $g$.

\begin{prop}
	\label{prop:even-polyhomogeneous-expansion-of-harmonic-maps}
	Let $(Y^{q+1},h_+)$ and $(X^{n+1},g_+)$ be two conformally compact Einstein manifolds,
	where $q\geqq 1$ is odd and $n>q$, and assume moreover that $h_s$ has strongly even expansion.
	Let $u\colon (Y^{q+1},h_+)\to (X^{n+1},g_+)$ be a proper map admitting polyhomogeneous expansion
	such that $\varphi\colon N\to M$ has nowhere vanishing differential and $\tau(u)=o(s^{q+1})$,
	i.e., each component of $\tau(u)$ satisfies $\tau(u)^I=o(s^{q+2})$.
	Then $u$ has regular even expansion in the sense of Definition \ref{dfn:evenness-of-maps}.
	More precisely, the expansion of $u$ takes the form described below.

	(1) When $q=1$, we assume that $Y=\mathbb{H}^2$ and we use the upper-half plane coordinates.
	Then,
	\begin{subequations}
	\label{eq:harmonic-map-polyhomogeneous-expansion-for-q-one}
	\begin{align}
		\label{eq:harmonic-map-polyhomogeneous-expansion-for-q-one-normal}
		u^0&=su_1^0+s^3u_3^0+o(s^3),\\
		\label{eq:harmonic-map-polyhomogeneous-expansion-for-q-one-tangential}
		u^i&=\varphi^i+s^2u_2^i+s^3u_3^i+o(s^3),
	\end{align}
	\end{subequations}
	where $u_1^0$, $u_2^i$ are formally determined
	and $u_3^0$, $u_3^i$ are formally undetermined.
	\noeqref{eq:harmonic-map-polyhomogeneous-expansion-for-q-one-normal}
	\noeqref{eq:harmonic-map-polyhomogeneous-expansion-for-q-one-tangential}

	(2) When $q\geqq 3$,
	\begin{subequations}
	\label{eq:harmonic-map-polyhomogeneous-expansion-for-q-large}
	\begin{align}
		\label{eq:harmonic-map-polyhomogeneous-expansion-for-q-large-normal}
		u^0&=su_1^0+\dots+(\text{odd powers})+\dots+s^{q+2}u_{q+2}^0+s^{q+2}\log s\cdot v^0+o(s^{q+2}),\\
		\label{eq:harmonic-map-polyhomogeneous-expansion-for-q-large-tangential}
		u^i&=\varphi^i+s^2u_2^0+\dots+(\text{even powers})+\dots+
		s^{q+1}u_{q+1}^i+s^{q+2}u_{q+2}^i+o(s^{q+2}),
	\end{align}
	\end{subequations}
	where
	\begin{itemize}
		\item 
			For $k\leqq q+1$, $u_k^0$ and $u_k^i$ are always formally determined;
		\item
			$u_{q+2}^0$ is also formally determined when $n\geqq q+2$;
		\item
			$v^0$ is zero when $n\geqq q+2$;
		\item
			$u_{q+2}^i$ is always formally undetermined.
	\end{itemize}
	\noeqref{eq:harmonic-map-polyhomogeneous-expansion-for-q-large-normal}
	\noeqref{eq:harmonic-map-polyhomogeneous-expansion-for-q-large-tangential}
\end{prop}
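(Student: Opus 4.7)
The plan is to determine the formal expansion of $u$ order by order from the tension field structure \eqref{eq:harmonic-map-equation-principal-part}, combined with an evenness symmetry of the harmonic map equation. First, I would linearize \eqref{eq:harmonic-map-equation-principal-part} around the leading ansatz $u^0 = s u_1^0$, $u^i = \varphi^i$, with $u_1^0 = |d\varphi|/\sqrt{q}$ recovered as in \eqref{eq:first-normal-coefficient-of-harmonic-map}. A perturbation $s^k w^0$ in $u^0$ contributes $(k^2 - (q+2)k - (q-1))\,s^k w^0$ to $\tau(u)^0$, while $s^k w^i$ in $u^i$ contributes $k(k-q-2)\,s^k w^i$ to $\tau(u)^i$. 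For $q \geq 3$ odd, the discriminant $q(q+8)$ of $k^2-(q+2)k-(q-1)$ is not a perfect square (since $\gcd(q,q+8)=1$ and the only pair of odd squares differing by $8$ is $(1,9)$), so the $u^0$ indicial polynomial has no integer root, while $k(k-q-2)$ has integer roots at $0$ and $q+2$ only. Hence at each order $s^k$ with $k \leq q+1$, the coefficients $u_k^0, u_k^i$ are uniquely solvable in terms of previously determined data, and at order $s^{q+2}$ only the $u^i$-equation is resonant, explaining why $u^i_{q+2}$ is formally undetermined while $u^0_{q+2}$ remains uniquely solvable. The case $q=1$ is precisely what was carried out in Section \ref{sec:proof-of-main-theorem}, where both indicial polynomials vanish at $k=3$, with $u^0_3$ and $u^i_3$ both turning out undetermined and the log coefficients vanishing.

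Next, I would establish evenness via the parity involution $(s, u^0) \mapsto (-s, -u^0)$. Strong evenness of $h_s$ makes $h_+$ invariant under $s \mapsto -s$, while the parity argument recalled in Section \ref{sec:prelim-on-ccE-manifolds} shows that $g_x$, and hence $g_+$, is invariant under $x \mapsto -x$. The harmonic map equation $\tau(u) = 0$ is thus preserved by the joint involution, so $\tilde u(s,t) := (-u^0(-s,t),\, u^i(-s,t))$ is another formal solution with the same boundary value $\varphi$. By the unique solvability through order $s^{q+1}$ just established, $u = \tilde u$ through that order; hence $u^0$ contains only odd powers of $s$ and $u^i$ only even powers up to $s^{q+1}$, matching the form in \eqref{eq:harmonic-map-polyhomogeneous-expansion-for-q-large}. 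The free data $u^i_{q+2}$ at the resonant order then accounts for the appearance of the single ``wrong-parity'' term $s^{q+2}$ in the $u^i$ expansion.

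The critical order $s^{q+2}$ is handled as follows. For the $u^i$-equation, vanishing of the indicial polynomial at $k = q+2$ means $u^i_{q+2}$ is free and a log term would be forced only if the source fails to lie in the range of the zero indicial operator; the induction, combined with the established parity structure, ensures the source at this order is in that range. For the $u^0$-equation, non-vanishing of $k^2-(q+2)k-(q-1)$ at $k = q+2$ means a log term $v^0$ could only arise from log contributions in the source, and the sole possible such contribution is the Fefferman--Graham log term $x^n \log x \cdot h$ in the expansion \eqref{eq:Einstein-metric-even-expansion} of $g_x$. Under $n \geq q+2$, composition with $u^0 = O(s)$ pushes this log to order $\geq s^n \geq s^{q+2}$; a direct residue check at the critical order then shows that the coefficient of the would-be $s^{q+2} \log s$ contribution in the source vanishes, yielding $v^0 = 0$ and a formally determined $u^0_{q+2}$.

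I expect the main obstacle to be this last residue computation together with verifying that the obstruction for $u^i_{q+2}$ vanishes. Both require careful bookkeeping of how strong evenness of $h_s$ and evenness of $g_x$ combine through the nonlinear tension field \eqref{eq:formula-of-tension-field} to enforce the necessary cancellations. The concrete computation carried out in Section \ref{subsec:third-order-coefficients} for $q = 1$, where vanishing of the log coefficients $v^0, v^i$ was checked directly, provides a prototype for the general case.
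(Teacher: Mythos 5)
Your proposal follows the same architecture as the paper's proof: an indicial analysis of the linearization of \eqref{eq:harmonic-map-equation-principal-part} (a perturbation $s^\nu w^0$ feeds $(\nu^2-(q+2)\nu-(q-1))s^\nu w^0$ into $\tau(u)^0$ and $s^\nu w^i$ feeds $\nu(\nu-q-2)s^\nu w^i$ into $\tau(u)^i$), followed by the parity involution $(s,x)\mapsto(-s,-x)$ applied to the uniquely determined truncated polynomial solution, and finally an order count locating where the non-local and logarithmic terms of $g_x$ first enter the source. Two points need repair, though neither changes the structure. First, for the normal equation you argue that the discriminant $q(q+8)$ is not a perfect square, so there is no \emph{integer} resonance; but the hypothesis only assumes a general polyhomogeneous expansion, so you must also exclude free modes of non-integer order $\nu\in(0,q+2)$, and for that irrationality of the roots is not enough --- you need their location, namely $\nu_-=\frac{1}{2}(q+2-\sqrt{q^2+8q})<0$ and $\nu_+=\frac{1}{2}(q+2+\sqrt{q^2+8q})>q+2$ (the latter because $q>1$). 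This is the estimate the paper actually uses, and it is what guarantees that no non-integer-order terms appear below $s^{q+2}$ (cf.\ Remark \ref{rem:necessity-of-non-integer-powers}).

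Second, your order count for the Fefferman--Graham log term is off by one. The term $x^n\log x\cdot h$ in $g_x$ enters $\tau(u)^0$ through $\tensor{\Gamma}{^0_j_k}\sim x^{n-1}\log x$ multiplied by $h_+^{AB}\partial_Au^j\partial_Bu^k=O(s^2)$, hence at order $s^{n+1}\log s$, not $s^n\log s$. Consequently, for $n\geqq q+2$ the log source sits at order $\geqq s^{q+3}\log s$, strictly beyond the critical order, and $v^0=0$ follows with no residue computation at all; the residue check you anticipate is an artifact of the miscount. (The same count shows that a nonzero $v^0$ can be forced precisely when $n=q+1$, which is why the proposition restricts the vanishing of $v^0$ to $n\geqq q+2$.) The analogous count for the formally undetermined coefficients of $g_x$ --- they enter $\tau(u)^I$ at order $O(s^{n+1})$ --- is also what separates ``uniquely solvable'' from ``formally determined in terms of $\varphi$, $h$, $g$'' for $u^0_{q+2}$, a distinction your first paragraph elides. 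With these corrections your argument coincides with the paper's.
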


\begin{proof}
	Part (1) is what we have checked quite explicitly in
	\S\S\ref{subsec:prelim-on-harmonic-maps}--\ref{subsec:third-order-coefficients}.
	Let us discuss part (2) here.

	As mentioned in \S\ref{subsec:prelim-on-harmonic-maps},
	to achieve $\tau(u)^I=o(s)$,
	there must be no terms of order $0<\nu<1$ and the expansion of
	$u^I$ must begin with \eqref{eq:first-normal-coefficient-of-harmonic-map}.
	Suppose that we have fixed the polyhomogeneous expansion of $u$ up to
	a certain order $\nu_0\geqq 1$ so that $\tau(u)^I=O(s^{\nu}(\log s)^L)$, where $\nu>\nu_0$.
	Then, if we set,
	\begin{equation}
		\tilde{u}^I=u^I+\sum_{l=0}^L s^\nu(\log s)^l u^I_{\nu,l}+o(s^\nu)
	\end{equation}
	a computation similar to \eqref{eq:change-of-tension-field-for-surfaces} shows that
	one can uniquely determine the coefficients $u_{\nu,l}^I$ for $0\leqq l\leqq L$ so that
	$\tau(\tilde{u})^I=o(s^\nu)$ is satisfied unless
	$\nu^2-(q+2)\nu-(q-1)=0$ or $\nu(\nu-q-2)=0$.
	Since the roots $\nu_\pm=\frac{1}{2}(q+2\pm\sqrt{q^2+8q})$ for the first equation
	satisfy $\nu_-<0<q+2<\nu_+$, one can see that the polyhomogeneous expansion of $u$ must take the form
	\begin{equation}
		\label{eq:primitive-expansion-of-harmonic-mapping-for-q-large}
		\begin{aligned}
			u^0&=\sum_{k=1}^{q+2}s^ku_k^0+s^{q+2}\log s\cdot v^0+o(s^{q+2}),\\
			u^i&=\varphi^i+\sum_{k=1}^{q+2}s^ku_k^i+s^{q+2}\log s\cdot v^i+o(s^{q+2}),
		\end{aligned}
	\end{equation}
	where the terms indicated by $o(s^{q+2})$ may contain terms of non-integer order.
	The appearance of a logarithmic term $s^{q+2}\log s\cdot v^0$ can happen only when $n=q+1$,
	in which case the term is caused by the logarithmic term in the expansion of $g_x$
	(see \eqref{eq:Einstein-metric-even-expansion}).

	Through the inductive procedure of determining the expansion,
	all the coefficients displayed in \eqref{eq:primitive-expansion-of-harmonic-mapping-for-q-large}
	except for $u_{q+2}^i$ can be expressed
	by a local formula in $\varphi$ and the coefficients in the expansions of $h_s$ and $g_x$.
	Since we are assuming that $h_s$ has strongly even expansion, all $h_{2\nu}$ are
	formally determined by $h$.
	On the other hand, $g_x$ contains non-local coefficients at order $n$ and higher,
	and formula \eqref{eq:formula-of-tension-field} implies the contribution of these coefficients
	in $\tau(u)^I$ is of order $O(s^{n+1})$.
	Therefore, the coefficients $u_k^I$ for $k\leqq q+1$ are always formally determined by $\varphi$, $h$ and $g$,
	and so are $u_{q+2}^0$ and $v^i$ when $n\geqq q+2$.

	The inductive procedure also implies that the truncated series
	\begin{equation}
		u^0_{(q+1)}=\sum_{k=1}^{q+1}s^ku_k^0,\qquad
		u^i_{(q+1)}=\varphi^i+\sum_{k=1}^{q+1}s^ku_k^i
	\end{equation}
	is the unique polynomial in $s$ of degree $q+1$ satisfying $\tau(u)^I=o(s^{q+1})$.
	If we formally replace the variables $s$ and $x$ with $-s$ and $-x$,
	then we obtain another solution to $\tau(u)^I=o(s^{q+1})$:
	\begin{equation}
		u^0_{(q+1),-}=\sum_{k=1}^{q+1}(-1)^{k+1}s^ku_k^0,\qquad
		u^i_{(q+1),-}=\varphi^i+\sum_{k=1}^{q+1}(-1)^ks^ku_k^i.
	\end{equation}
	Therefore, by the uniqueness, the expansion of $u^I_{(q+1)}$ has the evenness property.
	It then follows from \eqref{eq:formula-of-tension-field} that the $s^{q+2}$ term in $\tau(u_{(q+1)})^i$
	may occur from the terms in $\tensor{\Gamma}{^I_J_K}$ coming from the $n$-th order terms in $(g_x)_{ij}$,
	but a detailed investigation of $\tensor{\Gamma}{^I_J_K}$ (cf.\ \eqref{eq:Christoffel-symbols-for-g+})
	shows the vanishing of the $s^{q+2}$ term in $\tau(u_{(q+1)})^i$,
	which implies that $v^i=0$.
\end{proof}

\begin{rem}
	\label{rem:necessity-of-non-integer-powers}
	By pushing the above considerations further,
	we find that in general the polyhomogeneous expansions can only have $s^{k_1+k_2\nu_+}(\log s)^l$ terms,
	where $k_1$, $k_2$, $l\geqq 0$ are integers
	and $\nu_+=\frac{1}{2}(q+2+\sqrt{q^2+8q})$.
	A more precise form of the expansion can be found in Economakis \cite{Economakis-93-Thesis}.
\end{rem}

Let $u\colon (Y^{q+1},h_+)\to (X^{n+1},g_+)$ be a mapping between conformally compact Einstein manifolds
as in Proposition \ref{prop:even-polyhomogeneous-expansion-of-harmonic-maps}.
Its energy density $e(u)=\frac{1}{2}\abs{du}^2$, which is given by the formula
\begin{equation}
	e(u)=
	\frac{1}{2}\frac{s^2}{(u^0)^2}
	\Big((\partial_su^0)^2+h_s^{ab}(\partial_au^0)(\partial_bu^0)
	+(g_{u^0})_{ij}(\partial_su^i)(\partial_su^j)
	+h_s^{ab}(g_{u^0})_{ij}(\partial_au^i)(\partial_bu^j)\Big),
\end{equation}
has an expansion of the form
\begin{equation}
	e(u)=\sum_{\nu=0}^{(q-1)/2}s^{2\nu}e_{2\nu}+o(s^q),
\end{equation}
where all the coefficients $e_k$ are formally determined.
Since
\begin{equation}
	dV_{h_+}=\frac{ds\,dV_{h_s}}{s^{q+1}}
\end{equation}
and $dV_{h_s}$ expands as
\begin{equation}
	\label{eq:volume-density-expansion-for-domain}
	dV_{h_s}=dV_h\left(1+\sum_{\nu=1}^{(q-1)/2}s^{2\nu}v_{2\nu}+o(s^q)\right),
\end{equation}
we have
\begin{equation}
	\label{eq:energy-renormalization-expansion}
	\int_{\set{s\geqq\varepsilon}}e(u)\,dV_{h_+}=
	\sum_{\nu=0}^{(q-1)/2}\frac{c_{2\nu}}{\varepsilon^{q-2\nu}}+\Eren+o(1),
\end{equation}
and Lemma \ref{lem:Graham-lemma-for-changing-GL-normalization} implies that
the constant term $\Eren$ is independent of the choice of the Graham--Lee normalization of $(Y,h_+)$,
the proof of which is exactly the same as that of Theorem 3.1 in \cite{Graham-00}.
We call $\Eren$ the \emph{renormalized energy} of $u$.

\subsection{Critical points of the renormalized energy}

Obviously, the definition of the renormalized energy $\Eren$ generalizes to
any smooth proper map $u\colon(Y,h_+)\to (X,g_+)$ admitting regular even polyhomogeneous expansion.
Let us discuss variations of $u$ within the class of such maps.

The following lemma holds in the context of general Riemannian geometry.
For brevity, in this lemma we let $h$ and $g$ denote the metrics in $Y$ and $X$, instead of $h_+$ and $g_+$.

\begin{lem}
	\label{lem:infinitesimal-deformation-of-differential-of-maps}
	Let $u_t\colon(Y,h)\to(X,g)$ be a smooth family of smooth maps between Riemannian manifolds.
	For any $p\in Y$, we define
	the curve $\gamma_p\colon(-\varepsilon,\varepsilon)\to X$ by $\gamma_p(t)=u_t(p)$.
	Let $\Pi_t\colon T_{\gamma_p(t)}X\to T_{\gamma_p(0)}X$ be the parallel translation along $\gamma_p$,
	and for $v\in T_pY$, we write $w_t(v)=\Pi_t(du_t(v))\in T_{u_0(p)}X$.
	Then, if we write $\dot{u}=(du_t/dt)|_{t=0}$, it holds that
	\begin{equation}
		\left.\left(\frac{d}{dt}w_t\right)\right|_{t=0}=\nabla\dot{u},
	\end{equation}
	which is an equality between sections of $T^*Y\otimes u^*TX$.
\end{lem}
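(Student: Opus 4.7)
The plan is to use the defining property of parallel transport to convert the statement into one about the covariant derivative of a vector field along a curve, and then to extract that covariant derivative from the torsion-freeness of $\nabla dU$ for a natural auxiliary map $U\colon(-\varepsilon,\varepsilon)\times Y\to X$.

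First, I would recall the standard fact that for any smooth section $V(t)$ of $\gamma_p^*TX$,
\begin{equation}
	\left.\frac{d}{dt}\right|_{t=0}\Pi_t(V(t))=\left.\frac{DV}{dt}\right|_{t=0},
\end{equation}
where $D/dt$ denotes the covariant derivative along $\gamma_p$ with respect to the Levi-Civita connection of $(X,g)$. (This is immediate from writing $V(t)$ as a linear combination, with $t$-dependent coefficients, of a parallel frame along $\gamma_p$.) Applying this with $V(t)=du_t(v)\in T_{\gamma_p(t)}X$, the lemma reduces to the pointwise identity $(DV/dt)|_{t=0}=\nabla_v\dot{u}$ on $T_{u_0(p)}X$.

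Next, I would introduce the smooth map $U\colon(-\varepsilon,\varepsilon)\times Y\to X$, $(t,q)\mapsto u_t(q)$, together with the pullback connection on $U^*TX$. Fix a local extension $\tilde{v}$ of $v$ to a vector field on $Y$, lift it to the vector field $(0,\tilde{v})$ on $(-\varepsilon,\varepsilon)\times Y$, and note that it commutes with $\partial_t$. Since the Levi-Civita connection of $(X,g)$ is torsion-free, the second fundamental form $\nabla dU$ of $U$ is symmetric, which combined with $[\partial_t,(0,\tilde{v})]=0$ yields
\begin{equation}
	\nabla^{U^*TX}_{\partial_t}\bigl(dU((0,\tilde{v}))\bigr)
	=\nabla^{U^*TX}_{(0,\tilde{v})}\bigl(dU(\partial_t)\bigr).
\end{equation}

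Finally, I would evaluate both sides at $(0,p)$. The left-hand side equals $(DV/dt)|_{t=0}$, because $dU((0,\tilde{v}))|_{(t,p)}=du_t(v)=V(t)$, and the pullback connection on $U^*TX$ restricted to the curve $t\mapsto(t,p)$ is exactly $D/dt$. The right-hand side equals $\nabla_v\dot{u}$, because $dU(\partial_t)$ restricted to $\{0\}\times Y$ is $\dot{u}\in\Gamma(u_0^*TX)$, and the pullback connection on $U^*TX$ restricts along $\{0\}\times Y$ in $Y$-directions to the pullback connection on $u_0^*TX$. Combining these two interpretations gives the claimed identity. There is no genuine obstacle; the only bookkeeping to be careful about is verifying the canonical identification of the two pullback connections, and ensuring that the right-hand side is independent of the choice of extension $\tilde{v}$ (which is automatic since $\nabla\dot{u}$ is a genuine tensor).
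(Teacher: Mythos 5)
Your proof is correct, and it takes a recognizably different route from the paper's. The paper works entirely in local coordinates: it writes down the parallel-transport ODE for $w(p,v,t,s)$, differentiates the relation $w(y,v,t,t)^i=v^a\partial_{y^a}u_t(y)^i$ in $t$ via the chain rule, and reads off $\nabla_v\dot{u}$ from the resulting two terms, invoking torsion-freeness only at the last step to recognize the coordinate expression as a covariant derivative. You instead split the statement into two invariant pieces: the standard fact that $\frac{d}{dt}\big|_{t=0}\Pi_t(V(t))=\frac{DV}{dt}\big|_{t=0}$ for sections along $\gamma_p$, and the symmetry lemma $\nabla^{U^*TX}_{\partial_t}\bigl(dU(W)\bigr)-\nabla^{U^*TX}_{W}\bigl(dU(\partial_t)\bigr)=dU([\partial_t,W])$ for the auxiliary map $U$ on $(-\varepsilon,\varepsilon)\times Y$, which again rests on torsion-freeness (of the target Levi-Civita connection and of whatever torsion-free connection, e.g.\ the product one, you place on the domain --- worth stating explicitly, since the symmetry of $\nabla dU$ uses both). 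The underlying mechanism is the same --- torsion-freeness lets you trade the $t$-derivative of $du_t(v)$ for the $v$-derivative of $\dot{u}$ --- but your version is coordinate-free, makes the independence of the extension $\tilde{v}$ transparent, and connects the lemma to the familiar symmetry lemma used in first-variation arguments; the paper's version is more self-contained and elementary, requiring nothing beyond the parallel-transport ODE and the local formula for $\nabla$.
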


\begin{proof}
	Let $w(p,v,t,s)\in T_{\gamma_p(s)}X$ be the parallel translation of $du_t(v)$ along $\gamma_p$
	(note that $w_t(v)=w(p,v,t,0)$).
	To carry out necessary computations,
	we introduce local coordinates $(y^a)$ around $p\in Y$ and $(x^i)$ around $u(p)\in X$.
	Then we may regard $w(p,v,t,0)$ locally as the set of functions $w(y,v,t,0)^i$.
	It follows from the definition of $w$ that
	\begin{equation}
		\label{eq:parallel-translation-of-deformed-maps}
		w(y,v,t,t)^i=du_t(v)^i=v^a\partial_{y^a}u_t(y)^i
	\end{equation}
	and
	\begin{equation}
		\partial_sw(y,v,t,s)^i+\tensor{\Gamma}{^i_j_k}(u_s(y))\cdot\partial_su_s(y)^j\cdot w(y,v,t,s)^k=0.
	\end{equation}
	By differentiating \eqref{eq:parallel-translation-of-deformed-maps} in $t$, we obtain
	\begin{equation}
		\begin{split}
			v^a\partial_{y^a}\dot{u}(y)^i
			&=(\partial_tw)(y,v,0,0)^i+(\partial_sw)(y,v,0,0)^i \\
			&=\left.\left(\frac{d}{dt}w_t(v)^i\right)\right|_{t=0}
			-\tensor{\Gamma}{^i_j_k}(u(y))\cdot\dot{u}(y)^j\cdot du(v)^k.
		\end{split}
	\end{equation}
	Consequently,
	\begin{equation}
		\left.\left(\frac{d}{dt}w_t(v)^i\right)\right|_{t=0}
		=v^a\partial_{y^a}\dot{u}(y)^i+\tensor{\Gamma}{^i_j_k}(u(y))\cdot\dot{u}(y)^j\cdot du(v)^k
		=(\nabla_v\dot{u})^i,
	\end{equation}
	where the last equality follows because the Levi-Civita connection of $(X,g)$ is torsion-free.
\end{proof}

\begin{prop}
	\label{prop:characterization-of-criticality}
	Let $(Y^{q+1},h_+)$ and $(X^{n+1},g_+)$ be two conformally compact Einstein manifolds,
	where $q\geqq 1$ is odd and $n>q$, and assume that $h_s$ has strongly even expansion.
	Then a smooth proper map $u\colon (Y,h_+)\to (X,g_+)$ admitting regular even polyhomogeneous expansion
	is a critical map for the functional $\Eren$ among such mappings
	if and only if
	$u$ is harmonic and
	the formally undetermined coefficient
	$u^i_3$ in \eqref{eq:harmonic-map-polyhomogeneous-expansion-for-q-one} or
	$u^i_{q+2}$ in \eqref{eq:harmonic-map-polyhomogeneous-expansion-for-q-large} vanishes.
\end{prop}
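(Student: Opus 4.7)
The strategy is a standard first-variation calculation on truncated domains $\{s>\varepsilon\}$, followed by asymptotic analysis of the resulting boundary term. Applying the usual integration-by-parts formula (legitimized by Lemma \ref{lem:infinitesimal-deformation-of-differential-of-maps} for the variation of $du$), for any smooth one-parameter family $u_t$ within the admissible class and $\dot u = \partial_t u_t|_{t=0}\in\Gamma(u^*TX)$ one obtains
\begin{equation*}
\frac{d}{dt}\bigg|_{t=0}\mathcal{E}(u_t|_{\{s>\varepsilon\}}) = -\int_{\{s>\varepsilon\}}\langle\tau(u),\dot u\rangle_{g_+}\,dV_{h_+} - \frac{1}{\varepsilon^q}\int_{\{s=\varepsilon\}}\langle du(s\partial_s),\dot u\rangle_{g_+}\,dV_{h_\varepsilon},
\end{equation*}
the prefactor $\varepsilon^{-q}$ accounting for $dA_{h_+}|_{\{s=\varepsilon\}} = \varepsilon^{-q}dV_{h_\varepsilon}$ and the outward unit normal $-s\partial_s$. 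Testing first against $\dot u$ compactly supported in $Y$ (an admissible variation) kills the boundary term; the remaining bulk integral is finite and its vanishing forces $\tau(u)\equiv 0$ pointwise. So $u$ must be harmonic.

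With $\tau(u)=0$ the bulk integral vanishes identically, and $\delta\Eren[\dot u]$ is minus the $\varepsilon^0$-coefficient of the expansion of the boundary integral. To identify it, I would expand
\begin{equation*}
\langle du(s\partial_s),\dot u\rangle_{g_+} = \frac{1}{(u^0)^2}\bigl[(s\partial_s u^0)(\dot u^0) + (g_x)_{ij}(u)(s\partial_s u^i)(\dot u^j)\bigr]
\end{equation*}
asymptotically. The key observation is a parity one: within the structured range $k\leq q+1$, the expansions of $u^0$ and $\dot u^0$ are purely odd in $s$ and those of $u^i-\varphi^i$ and $\dot u^i-\dot\varphi^i$ are purely even; combined with the evenness in $s$ of $(u^0)^2$, of $(g_x)_{ij}(u)$, and (by the strongly-even assumption on $h_s$) of the volume correction $dV_{h_s}/dV_h$, this implies that the even part of the pairing feeds only the polar $\varepsilon^{-q+2\nu}$ terms, while the $\varepsilon^0$-coefficient is picked up by the odd part alone.

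The first evenness-breaking term in the expansion of $u$ is the undetermined coefficient $u^i_{q+2}$ at order $s^{q+2}$ (odd, since $q$ is odd). A bookkeeping check enumerating the index pairs $(a,b)$ with $a+b=q+2$ shows the only $s^{q+2}$-contribution to $(s\partial_s u^i)(\dot u^j)$ is the product $(q+2)u^i_{q+2}\dot\varphi^j$, and no analogous contribution occurs in $(s\partial_s u^0)(\dot u^0)$ because two odd series multiply to an even one while $q+2$ is odd. The logarithmic coefficient $v^0$ (only possibly nonzero when $n=q+1$) and the unstructured variation $\dot u^i_{q+2}$ are easily seen to feed only into terms of strictly higher order. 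After dividing by $(u^0)^2\sim s^2(u_1^0)^2$ this produces the $s^q$-coefficient $(q+2)(u_1^0)^{-2}g_{ij}(\varphi)u^i_{q+2}\dot\varphi^j$, whence
\begin{equation*}
\delta\Eren[\dot u] = -(q+2)\int_N\frac{\langle u^i_{q+2},\dot\varphi\rangle_g}{(u_1^0)^2}\,dV_h
\end{equation*}
(with $u^i_{q+2}$ read as $u^i_3$ when $q=1$). Since $\dot\varphi$ ranges freely over sections of $\varphi^*TM$ as $u_t$ varies in the admissible class, vanishing for all $\dot u$ is equivalent to $u^i_{q+2}\equiv 0$.

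The main obstacle is the parity bookkeeping in the expansion of the boundary pairing: tracking every cross-term from the expansions of $u$, $\dot u$, $g_x$, and $h_s$ to the relevant order, cleanly distinguishing the structured range $k\leq q+1$ (where parity is rigid) from the unstructured range beyond (in which both $u$ and $\dot u$ may a priori carry evenness-breaking and logarithmic terms), and verifying that the single evenness-breaking term $(q+2)u^i_{q+2}\dot\varphi^j$ is the unique odd contribution at order $s^q$ after division by $x^2$.
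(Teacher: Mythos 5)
Your proposal is correct and follows essentially the same route as the paper: first variation via the lemma on infinitesimal deformations, compactly supported variations forcing harmonicity, the divergence theorem producing the boundary term $-\varepsilon^{-q}\int_{\{s=\varepsilon\}}\braket{du(s\partial_s),\dot u}\,dV_{h_\varepsilon}$, and a parity analysis isolating $(q+2)(u_1^0)^{-2}\braket{u^i_{q+2},\dot\varphi}_g$ as the sole contribution to the constant term. The paper carries out the bookkeeping explicitly only for $q=1$ (recovering your formula with coefficient $3$ and $(u_1^0)^2=\abs{d\varphi}^2$) and leaves the general case as "similar," so your general-$q$ parity argument matches its intent.
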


\begin{proof}
	Suppose that $u$ is a critical map for $\Eren$.
	Considering compactly supported perturbations of $u$ shows that $u$ has to be harmonic
	just like in the case of mappings from closed manifolds.

	Therefore, we assume that $u$ is a harmonic map and
	we consider a smooth family $u_t$ of mappings admitting regular even polyhomogeneous expansion
	such that $u_0=u$. Let $\varphi_t\colon N\to M$ be the restriction of $u_t$ to the boundary.
	Then, since $\abs{du_t}^2$ equals $\abs{\Pi_t\circ du_t}^2$,
	it follows from Lemma \ref{lem:infinitesimal-deformation-of-differential-of-maps} that
	\begin{equation}
		\left.\left(\frac{d}{dt}e(u_t)\right)\right|_{t=0}
		=\left(\left.\frac{1}{2}\frac{d}{dt}\abs{du_t}^2\right)\right|_{t=0}
		=\braket{du,\nabla\dot{u}},
	\end{equation}
	and since $u$ is a harmonic map,
	\begin{equation}
		\frac{d}{dt}
		\left.\left(\int_{\set{s\geqq\varepsilon}}e(u)\,dV_{h_+}\right)\right|_{t=0}
		=\int_{\set{s\geqq\varepsilon}}\braket{du,\nabla\dot{u}}dV_{h_+}
		=-\int_{\set{s\geqq\varepsilon}}\nabla^*\braket{du,\dot{u}}dV_{h_+},
	\end{equation}
	where the last integrand is the divergence of the $1$-form $\braket{du,\dot{u}}$.
	By the divergence theorem, the last expression is equal to
	\begin{equation}
		\label{eq:variation-of-renormalized-energy}
		-\int_{s=\varepsilon}\braket{du(s\partial_s),\dot{u}}\frac{dV_{h_\varepsilon}}{\varepsilon^q}.
	\end{equation}
	In view of the expansion \eqref{eq:volume-density-expansion-for-domain} of $dV_{h_\varepsilon}$,
	the constant term in the asymptotic expansion of the above integral
	can be read off from the $s^{q-1}$, $s^{q-3}$, $\dotsc$ coefficients of $\braket{du(\partial_s),\dot{u}}$,
	i.e., $\tensor{(du)}{_0^I}\dot{u}^J(g_+)_{IJ}$.

	If $q=1$, since
	\begin{align}
		\tensor{(du)}{_0^0}&=u_1^0+3s^2u_3^0+s^2v^0+3s^2\log s\cdot v^0+\dotsb,\\
		\dot{u}^0&=s\dot{u}_1^0+s^3\dot{u}_3^0+s^3\log s\cdot\dot{v}^0+\dotsb,
	\end{align}
	there are no terms in $\tensor{(du)}{_0^0}\dot{u}^0(g_+)_{00}$ that concern. On the other hand,
	\begin{align}
		\tensor{(du)}{_0^i}&=2su_2^i+3s^2u_3^i+\dotsb,\\
		\dot{u}^j&=\dot{\varphi}^j+s^2\dot{u}_2^j+s^3\dot{u}_3+\dotsb,
	\end{align}
	and hence there is an $s^0$-term $3\abs{d\varphi}^{-2}\braket{u_3,\dot{\varphi}}_g$ in
	$\tensor{(du)}{_0^i}\dot{u}^j(u^0)^{-2}(g_{u^0})_{ij}$.
	Consequently, we have
	\begin{equation}
		\dot{\mathcal{E}}_\mathrm{ren}=-3\int_{N}\frac{\braket{u_3,\dot{\varphi}}_g}{\abs{d\varphi}^2}\,dV_h,
	\end{equation}
	and this vanishes for all smooth families $u_t$ if and only if $u_3=0$.
	The higher-dimensional case can be shown similarly.
\end{proof}

In view of Proposition \ref{prop:characterization-of-criticality},
when $u$ is given as a formal polyhomogeneous series, we say that $u$ is \emph{formally critical}
with respect to $\Eren$ when $\tau(u)^I=o(s^{q+2})$ and $u_{q+2}^i$ vanishes.
This definition extends further to the case in which $h_s$ and $g_x$ are given formally,
and furthermore, we can allow $h_+$ and $g_+$ to be pseudo-Riemannian.

It is natural to expect, and is indeed the case, that the formal criticality condition is
irrelevant to which Graham--Lee normalizations of $h_+$ and $g_+$ we use.
In fact, $\tau(u)^I=o(s^{q+2})$ is equivalent to $\tau(u)=o(s^{q+1})$,
which is obviously independent of the normalization,
so the point is the invariance of the condition $u_{q+2}^i=0$.
For simplicity, let us only give a detailed discussion here regarding changes of the normalization of $h_+$.
Recall that $h_+$ is assumed to be strongly even.
Under this assumption, the proof of Lemma \ref{lem:Graham-lemma-for-changing-GL-normalization}
shows that the change of coordinates $(s,t^i)\mapsto(\Hat{s},\Hat{t}^i)$
associated with different normalizations
satisfies $\Hat{s}=sb$ with $b=b(s,t^1,\dots,t^q)$ even in $s$ to infinite order.
Then, the discussion in \cite{Graham-Lee-91} following its Lemma 5.2 reveals that
\begin{equation}
	\frac{\partial s}{\partial\Hat{s}}
	=\frac{b+s\partial_s b}{(b+s\partial_s b)^2+s^2\abs{d_Nb}^2_{h_s}},\qquad
	\frac{\partial t^i}{\partial\Hat{s}}
	=\frac{s(\grad_{h_s}b)^i}{(b+s\partial_s b)^2+s^2\abs{d_Nb}^2_{h_s}},
\end{equation}
from which one can show that $(s,t^a)$ can be expressed in terms of $(\Hat{s},\Hat{t}^a)$ as
\begin{equation}
	s=\Hat{s}\psi^0(\Hat{s},\Hat{t}^1,\dots,\Hat{t}^q),\qquad
	t^a=\Hat{t}^a+\psi^a(\Hat{s},\Hat{t}^1,\dots,\Hat{t}^q),
\end{equation}
where $\psi^0$ and $\psi^a$ are even in $\Hat{s}$ to infinite order
and $\psi^0(0,\Hat{t}^1,\dots,\Hat{t}^q)>0$.
As a consequence, we can see that $\Hat{u}_{q+2}^i=0$ follows if $u_{q+2}^i=0$.
The proof of the invariance of $u_{q+2}^i=0$ for changes of the normalization of $g_+$ can be carried out similarly.

\begin{rem}
	\label{rem:localization}
	Since the above discussion regarding changes of the Graham--Lee normal forms is valid as well
	for local changes of normalizations,
	the formal criticality condition can be localized in the sense that it holds if and only if
	there exists an open covering $\set{\overline{U}_\lambda}$ of $\bdry_\infty Y$ in $\overline{Y}$
	and a choice of a Graham--Lee normalization in each $\overline{U}_\lambda$ such that,
	in each neighborhood,
	$\tau(u)^I=o(s^{q+2})$ and $u^i_{q+2}=0$ hold with respect to the chosen local normalization.
\end{rem}

Using the terminology we have just introduced,
we can say that a formal polyhomogeneous map $u\colon\mathbb{H}^2\to(X,g_+)$
as in Theorem \ref{thm:main-theorem-in-action} satisfying $\nabla du=o(s^2)$
is necessarily formally critical with respect to $\Eren$.

Finally, we remark that another notion of renormalized energy
is introduced by B\'erard \cite{Berard-13} in the context of conformally compact manifolds.
In his work, an energy renormalization scheme is given for
(non-proper) harmonic maps from conformally compact Einstein manifolds to closed manifolds.
While his work certainly belongs to the same cultural sphere as ours, his renormalized energy is
different from what we have described.
In particular, B\'erard's work provides conformally invariant objects on the boundary at infinity of the domain
of a harmonic map,
while we have been mainly interested in conformal geometry of the boundary at infinity of the target
in this paper.

\bibliography{myrefs}

\end{document}